\newcommand{\R}{\mathbb{R}}
\newcommand{\Ha}{\mathcal{H}}
\renewcommand{\S}{\mathbb{S}}
\newcommand{\N}{\mathbb{N}}
\newcommand{\Reg}{\operatorname{Reg}}
\title[Geometric Wasserstein spaces]{A geometric study of Wasserstein spaces: isometric rigidity
  in negative curvature}
\author{J\'er\^ome Bertrand}
\address{Institut de Math\'ematiques\\ Universit\'e Paul Sabatier \\ 118 route de
  Narbonne \\ F31062 Cedex 9 Toulouse\\ France}
\email{bertrand@math.univ-toulouse.fr}
\author{Beno\^{\i}t R. Kloeckner}
\address{Universit\'e Paris-Est, Laboratoire d'Analyse et de 
Math\'ematiques Appliqu\'ees (UMR 8050), UPEM, UPEC, CNRS, F-94010, 
Cr\'eteil, France \quad\textit{and} \newline
Univ. Grenoble Alpes, IF, F-38000 Grenoble, France \newline
CNRS, IF, F-38000 Grenoble, France}
\email{benoit.kloeckner@u-pec.fr}
\theoremstyle{plain}
\newtheorem*{acknowledgements}{Acknowledgements}
\newcommand{\CAT}{\mathop{\mathrm{CAT}}\nolimits} 
\newcommand{\isom}{\mathop{\mathrm{Isom}}\nolimits}
\newcommand{\wass}{\mathop{\mathscr{W}_2}\nolimits}
\newcommand{\radon}{\mathop{\mathscr{R}}\nolimits}
\newcommand{\dw}{\mathop{\mathrm{W}}\nolimits}
\newcommand{\supp}{\mathop{\mathrm{supp}}\nolimits}
\newcommand{\perpend}{\mathop{\perp}}
\newcommand{\RH}{\mathbb{R}\mathrm{H}}
\begin{document}
%%%%%%%%%%%%%%%%%%%%%%%%%%%%%%%%%%%%%%%%%%%%%%%%%%%%%%%%%%%%%%%
%%%%%%%%%%%%%%%%%%%%%%%%%%%%%%%%%%%%%%%%%%%%%%%%%%%%%%%%%%%%%%%
\begin{abstract}
Given a metric space $X$, one defines its Wasserstein space $\wass(X)$
as a set of sufficiently decaying probability measures on $X$ endowed with a metric
defined from optimal transportation. In this article, we continue the geometric study
of $\wass(X)$ when $X$ is a simply connected, nonpositively curved metric spaces
by considering its isometry group. When $X$ is Euclidean, the second named author
proved that this isometry group is larger than the isometry group of $X$. In contrast,
we prove here a rigidity result: when $X$ is negatively curved, any isometry
of $\wass(X)$ comes from an isometry of $X$.
\end{abstract}

\thanks{This work was supported by the Agence Nationale de la Recherche, grant ANR-11-JS01-0011.}

\maketitle

%%%%%%%%%%%%%%%%%%%%%%%%%%%%%%%%%%%%%%%%%%%%%%%%%%%%%%%%%%%%%%%%%%%%
\section{Introduction}

This article is part of a series where the Wasserstein space of a metric
space is studied from an intrinsic, geometric point of view. Given a
Polish metric space $X$, the set of its Borel probability measures
of finite second moment can be endowed \textit{via} optimal transport
with a natural distance; the resulting metric space is
the \emph{Wasserstein space} $\wass(X)$ of $X$. We shall only give a minimal amount of
background and we shall not discuss previous works, so as to avoid redundancy with the
previous articles in the series. One can think of these Wasserstein spaces as geometric 
measure theory analogues of $L^p$ spaces (here with $p=2$), but they recall 
much more of the geometry of $X$. Our goal is to understand precisely how the geometric
properties of $X$ and $\wass(X)$ are related.

%%%%%%%%%%%
\subsection{Main results}

Our setting here is at the intersection of the previous papers
\cite{Kloeckner} and \cite{Bertrand-Kloeckner}: as in the later we assume
$X$ is a Hadamard space, by which we mean a 
globally $\CAT(0)$, complete, locally compact space, and as in the former we consider
the isometry group of $\wass(X)$. Note that a weaker result concerning the isometric rigidity was included in 
a previous preprint version of \cite{Bertrand-Kloeckner}, which has been divided
due to length issues after a remark from a referee.

In \cite{Kloeckner}
it was proved that the isometry group of $\wass(\mathbb{R}^n)$ is strictly larger than 
the isometry group
of $\mathbb{R}^n$ itself, the case $n=1$ being the most striking: some isometries of 
$\wass(\mathbb{R})$
are exotic in the sense that they do not preserve the shape of measures. This
property seems pretty uncommon, and our main result is the following.
\begin{theo}\label{theo:intro:isometry}
If $X$ is a negatively curved geodesically complete Ha\-damard space then $\wass(X)$ is isometrically 
rigid in the sense that its only isometries are those induced by the isometries of $X$ itself.
\end{theo}
By ``Negatively curved'' we mean that $X$ is not a line and 
the $\CAT(0)$ inequality is strict except for triangles all of whose vertices are aligned,
see Section \ref{had} for details. 

As stated, our proof of Theorem \ref{theo:intro:isometry}
depends on strong results of Lytchak and Nagano \cite{LN}; but we shall treat many cases (manifolds, trees, some more general polyhedral complexes) without resorting
to \cite{LN}.

In the process of proving Theorem \ref{theo:intro:isometry}, we also get the following result that seems interesting by itself.
\begin{theo}\label{theo:side}
Let $Y,Z$ be geodesic and geodesically complete Polish spaces and assume that $Y$ is locally compact.
Then $\wass(Y)$ is isometric to $\wass(Z)$ if and only if $Y$ is isometric to $Z$.
\end{theo}
\begin{rema}
The statement of this result in the published version of the present paper is more general in that it does not ask $Z$ to be geodesically complete. However, we are only able to prove the above more restricted statement. We thank Zhengchao Wan for bringing this issue to our attention.
\end{rema}
It is quite surprising that whether a fully general version of this result holds is an open
question.

Note that the proof of Theorem \ref{theo:intro:isometry} involves the 
inversion of some kind of
Radon transform, that seems to be new (for trees it is in particular different from the horocycle
Radon transform) and could be of interest for other problems.

%%%%%%%%%%%%
\subsection{Preliminaries and basic notions}

In this preliminary section, for the sake of self-containment,
we briefly recall well-known general facts on Hadamard and
Wasserstein spaces. One can refer
to \cite{Ballmann,BH} and
\cite{Villani,Villani1} for proofs, further details and much more.

%%%%%%%%
\subsubsection{Wasserstein space}

Given a Polish (\textit{i.e.} complete and separable metric) space $X$,
one defines its (quadratic) Wasserstein space $\wass(X)$ as the set
of Borel probability measures $\mu$ on $X$ that satisfy
\[\int_X d^2(x_0,x) \mu(dx)<+\infty\]
for some (hence all) point $x_0\in X$, equipped by the distance $\dw$ defined by:
\begin{equation}
\dw^2(\mu_0,\mu_1)=\inf \int_{X\times X} d^2(x,y) \Pi(dxdy)
\label{eq:Wasserstein}
\end{equation}
where the infimum is taken over all measures $\Pi$ on $X\times X$
whose marginals are $\mu_0$ and $\mu_1$. Such a measure is called a transport plan and
is said to be optimal if it achieves the infimum.
In this setting, optimal plans always exist and $\dw$ turns
$\wass(X)$ into a metric space.

%%%%%%%
\subsubsection{Hadamard spaces}\label{had}

Given any three points $x,y,z$ in a geodesic Polish metric space $X$,
there is up to congruence a unique comparison triangle $x',y',z'$ in $\mathbb{R}^2$,
that is a triangle that satisfies $d(x,y)=d(x',y')$, $d(y,z)=d(y',z')$,
and $d(z,x)=d(z',x')$.

One says that $X$ has (globally) \emph{non-positive curvature} (in the sense of Alexandrov),
or is $\CAT(0)$,
if for all $x,y,z$ the distances between two points on sides of this triangle
is lesser than or equal to the distance between the corresponding points in the comparison
triangle, see figure \ref{fig:comparison}.
Note that some authors call this property ``globally $\CAT(0)$''.

Equivalently, $X$ is $\CAT(0)$ if for any triangle $x,y,z$, any geodesic
$\gamma$ such that $\gamma_0=x$ and $\gamma_1=y$, and any $t\in[0,1]$,
the following inequality holds:
\begin{equation}
d^2(y,\gamma_t)\leqslant (1-t)d^2(y,\gamma_0)+td^2(y,\gamma_1)-t(1-t)\ell(\gamma)^2
\label{eq:cat0}
\end{equation}
where $\ell(\gamma)$ denotes the length of $\gamma$ (which is equal to $d(x,z)$).

\begin{figure}[ht]\begin{center}
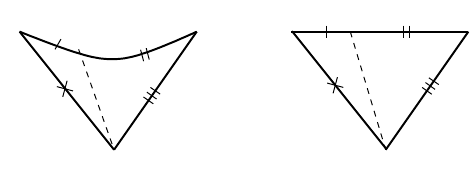
\caption{The $\CAT(0)$ inequality: the dashed segment is shorter in the triangle $xyz$
than in the comparison triangle on the right.}\label{fig:comparison}
\end{center}\end{figure}

We shall say that $X$ is a Hadamard space if $X$ is $\CAT(0)$, complete and locally compact.
By the generalized Cartan-Hadamard theorem, this implies that $X$ is simply connected.
Our goal is now, as said above, to study the isometry group of $\wass(X)$.

%%%%%%%%%%%
\subsection{Strategy of proof}

We have a natural morphism 
\begin{eqnarray*}
\# : \isom X  &\to& \isom\wass(X) \\ 
     \varphi  &\mapsto& \varphi_\#
\end{eqnarray*}
where $\varphi_\#$ is the push-forward of measures:
\[\varphi_\#\mu(A):=\mu(\varphi^{-1}(A)).\]
An isometry $\Phi$
of $\wass(X)$ is said to be \emph{trivial} if it is in the image
of $\#$, to \emph{preserve shape} if for all $\mu\in\wass(X)$ there is an isometry
$\varphi^\mu$ of $X$ such that $\Phi(\mu)=\varphi^\mu_\#\mu$, and to be \emph{exotic}
otherwise. When all isometries of $\wass(X)$ are trivial, that is when
$\#$ is an isomorphism, we say that $\wass(X)$ is \emph{isometrically rigid}.

When $X$ is Euclidean, $\wass(X)$ is not isometrically rigid as proved in \cite{Kloeckner}.

Our main result, Theorem \ref{theo:intro:isometry} can now be phrased as follows:
if $X$ is a geodesically complete, negatively curved Hadamard space,
then $\wass(X)$ is isometrically rigid.

We first prove this assuming the set of regular points (namely those 
whose tangent cone is isometric to a Euclidean space plus an extra 
assumption if the dimension is one, see Section \ref{blurb} for a 
precise definition) is a dense subset of $X$. The density of regular points 
holds in many examples such as manifolds, simplicial trees, polyhedral complexes whose 
faces are endowed with hyperbolic metrics such as $I_{p,q}$ buildings.
Building on results of Lytchak and Nagano \cite{LN}, we shall prove that the density
of regular points holds true for \emph{any} geodesically complete, negatively curved Hadamard space,
completing the proof of Theorem \ref{theo:intro:isometry}.

To  prove isometric rigidity, we show first that an isometry 
must map Dirac masses 
to Dirac masses. The method is similar to that used in 
the Euclidean case, and is valid for geodesically complete locally compact spaces
without curvature assumption.
It already yields Theorem \ref{theo:side}, answering positively
in a wide setting the following question: if
$\wass(Y)$ and $\wass(Z)$ are isometric, can one conclude that $Y$ and $Z$ are isometric?

The second step is to prove that an isometry that fixes all Dirac masses must
map a measure supported on a geodesic to a measure supported on the same geodesic.
Here we use that $X$ has negative curvature: this property is known to be false
in the Euclidean case.

Then as in \cite{Kloeckner} we are reduced to prove the injectivity of a specific Radon 
transform. The point is that this injectivity is known for $\RH^n$ and
is easy to prove for trees, but seems not to be known for other spaces. We give a
simple argument for manifolds, then extend it to spaces with a dense set of regular 
points. Finally, we prove that all 
geodesically complete, negatively curved Hadamard spaces have a dense set of regular points.

%%%%%%%%%%%%%%%%%%%%%%%%%%%%%%%%%%%%%%%%%%%%%%%%%%%%%%%%%%%%%%%%%%%%%%%%%%%%%%%%%
\section{Reduction to a Radon transform}\label{sec:isometries1}

In this Section we reduce Theorem \ref{theo:intro:isometry} to the injectivity
of a kind of Radon transform, and also prove Theorem \ref{theo:side}.

%%%%%%%%%%%%%%%%%%%%%%%%%%%%%%%%%%%%%%%%%%%%%%%%%%%%%%%%%%%%%%%%%%%%%%%%%%%%%%%%%
\subsection{Characterization of Dirac masses}

The characterization of Dirac masses follows from two lemmas, and can be carried
out without any assumption on the curvature. On the contrary, the assumption of {\it gedesic completeness}, 
namely that any geodesic can be extended to a (minimizing) geodesic defined on $\R$, is crucial. Note that when we ask a space
to be geodesically complete, we of course imply that it is geodesic. Note that
all geodesics are assumed to be parameterized with constant speed and to be globally minimizing.

\begin{lemm}\label{lem:extension}
Let $Y$ be a locally compact, geodesically complete Polish space.
Any geode\-sic segment in $\wass(Y)$
issued from a Dirac mass can be extended to a complete geodesic ray.
\end{lemm}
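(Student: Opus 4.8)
The plan is to lift the given geodesic segment to a measure on the space of geodesics of $Y$, to prolong each geodesic individually to a ray using geodesic completeness, and then to push this construction forward to obtain the desired ray in $\wass(Y)$. The decisive simplification coming from the hypothesis that the segment issues from a Dirac mass is that the only transport plan out of $\delta_{x_0}$ is the product plan; this lets me compute $\dw(\delta_{x_0},\mu_t)$ exactly and thereby anchor the whole curve. Note that geodesic completeness will be used precisely in the prolongation step, whereas the Dirac computation needs no hypothesis at all.

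Concretely, write the segment as $(\mu_t)_{t\in[0,1]}$ (reparametrizing with constant speed if necessary), with $\mu_0=\delta_{x_0}$, and set $L=\dw(\mu_0,\mu_1)$. First I would invoke the standard lifting of Wasserstein geodesics, valid for locally compact geodesic Polish spaces: there is a probability measure $\Pi$ on the space $G_{[0,1]}$ of constant-speed minimizing geodesics $\gamma\colon[0,1]\to Y$ with $(e_t)_\#\Pi=\mu_t$ for every $t$, where $e_t(\gamma)=\gamma(t)$ denotes evaluation. Since $\mu_0=\delta_{x_0}$, the measure $\Pi$ is concentrated on geodesics issued from $x_0$, and $\int \speed(\gamma)^2\,\Pi(d\gamma)=\dw^2(\mu_0,\mu_1)=L^2$.

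Next I would extend. By geodesic completeness each $\gamma\in G_{[0,1]}$ prolongs to a geodesic ray $\tilde\gamma\colon[0,\infty)\to Y$ of the same constant speed. The prolongation is not canonical, since at branch points several extensions may exist, so the technical heart of the argument is to choose it measurably: I would check that the prolongation relation is closed in the topology of uniform convergence on compact sets and that local compactness of the Polish space $Y$ makes the relevant spaces of geodesics Polish, so that the Kuratowski--Ryll-Nardzewski selection theorem furnishes a Borel map $E\colon G_{[0,1]}\to G_{[0,\infty)}$ with $E(\gamma)|_{[0,1]}=\gamma$. Setting $\tilde\Pi=E_\#\Pi$ and $\mu_t=(e_t)_\#\tilde\Pi$ for all $t\geqslant 0$ then produces a curve extending the original segment, and each $\mu_t$ lies in $\wass(Y)$ because $\int d^2(x_0,\gamma(t))\,\tilde\Pi(d\gamma)=t^2L^2<+\infty$.

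Finally I would verify that $(\mu_t)_{t\geqslant 0}$ is a geodesic ray, for which two inequalities suffice. On one hand $(e_s,e_t)_\#\tilde\Pi$ is an admissible transport plan, so for $0\leqslant s\leqslant t$ one has $\dw^2(\mu_s,\mu_t)\leqslant\int d^2(\gamma(s),\gamma(t))\,\tilde\Pi(d\gamma)=(t-s)^2L^2$, using that each $\tilde\gamma$ has constant speed. On the other hand, as the unique plan out of a Dirac mass is the product plan, $\dw^2(\delta_{x_0},\mu_t)=\int d^2(x_0,y)\,\mu_t(dy)=t^2L^2$ \emph{exactly}. Combining these with the triangle inequality gives $tL=\dw(\mu_0,\mu_t)\leqslant\dw(\mu_0,\mu_s)+\dw(\mu_s,\mu_t)\leqslant sL+(t-s)L=tL$, which forces $\dw(\mu_s,\mu_t)=(t-s)L$ for all $0\leqslant s\leqslant t$, i.e.\ the geodesic-ray condition. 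I expect the main obstacle to be the measurable prolongation of Step three, both in setting up the space of geodesics correctly in this level of generality and in justifying the selection; once that is in place, the remainder is bookkeeping around the rigid behavior of transport out of a Dirac mass.
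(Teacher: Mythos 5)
Your proposal is correct and follows essentially the same route as the paper: lift the segment to a displacement interpolation on the space of geodesics, extend by a measurable selection (the paper uses Dellacherie's compact-fiber selection theorem applied to the restriction map $\mathscr{R}(Y)\to\mathscr{G}^{0,T}(Y)$, with compactness of fibers from Arzel\`a--Ascoli, where you invoke Kuratowski--Ryll-Nardzewski --- an inessential difference), push forward, and verify the ray property exactly as the paper does, via the uniqueness of the plan out of a Dirac mass together with the triangle inequality. You correctly identified the measurable prolongation as the technical heart, and your metric verification matches the paper's.
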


\begin{proof}
Let us recall the measurable selection theorem 
(see for example \cite{Dellacherie}, Corollary of Theorem 17):
any surjective measurable map between Polish spaces admits a
measurable right inverse provided its fibers are compact. Consider
the restriction map
\[p^T : \mathscr{R}(Y) \to \mathscr{G}^{0,T}(Y)\]
where $\mathscr{G}^{0,T}(Y)$ is the set of geodesic segments parameterized on $[0,T]$
and $\mathscr{R}(Y)$ is the set of complete rays (geodesics are assumed here to be minimizing
and to have constant, non-necessarily unitary, possibly zero speed and all sets of geodesics
are endowed with the topology of uniform convergence on compact sets).

The fiber of a geodesic segment $\gamma$
is closed in the set $\mathscr{R}_{\gamma_0, s}(Y)$
of geodesic rays of speed $s=s(\gamma)$ starting at $\gamma_0$.
This set is compact by Arzela-Ascoli theorem (equicontinuity
follows from the speed being fixed, while the pointwise relative compactness
is a consequence of $Y$ being locally compact and geodesic, hence proper).

Moreover the geodesic completeness implies the surjectivity 
of $p^T$. There is therefore a measurable right inverse
$q$ of $p^T$: it maps a geodesic segment $\gamma$ to a complete ray whose restriction
to $[0,T]$ is $\gamma$.

Let $(\mu_t)_{t\in[0,T]}$ be a geodesic segment of $\wass(Y)$
of speed $\bar{s}$, with $\mu_0=\delta_x$ for some point $x$.

For $t\in[0,T]$, let 
$e_t:\mathscr{G}^{0,T}(Y)\to Y$
be the evaluation map at time $t$. We know from the theory of optimal transport
that there is a measure $\mu'$ on $\mathscr{G}^{0,T}(Y)$ such that $\mu_t=(e_t)_\#\mu'$,
called the displacement interpolation of the geodesic segment.

Let $\mu=q_\#(\mu')$: it is a probability measure
on $\mathscr{R}(Y)$ whose restriction $p^T_\#(\mu)$ is the
displacement interpolation of $(\mu_t)$. For all $t>0$ we therefore denote
by $\mu_t$ the measure $(e_t)_\#\mu$ on $X$; for $t\le T$, we retrieve the original
mesure $\mu_t$.

It is easy to see that $(\mu_t)_{t\ge 0}$ now defines
a geodesic ray: since there is only one transport plan from a Dirac mass to
any fixed measure,
$\dw^2(\mu_0,\mu_t)= \int s(\gamma)^2 t^2\, \mu(d\gamma) = \bar{s}^2 t^2$.
Moreover the transport plan from $\mu_t$ to $\mu_{t'}$ deduced from $\mu$ gives
$\dw(\mu_t,\mu_{t'})\leqslant \bar{s}|t-t'|$. But the triangular inequality
applied to $\mu_0$, $\mu_t$ and $\mu_{t'}$ implies
$\dw(\mu_t,\mu_{t'})\geqslant \bar{s}|t-t'|$ and we are done.
\end{proof}

\begin{lemm}\label{lemm:no-ext}
If $Y$ is a geodesic Polish space, given $\mu_0\in\wass(Y)$ not a Dirac mass
and $y\in\supp\mu_0$,
the geodesic segment from $\mu_0$ to $\mu_1:=\delta_y$ cannot be geodesically
extended for any time $t>1$.
\end{lemm}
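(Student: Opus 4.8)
The plan is to argue by contradiction and reduce the statement to the equality case of the triangle inequality in $\wass(Y)$. Suppose the geodesic segment $(\mu_t)_{t\in[0,1]}$ joining $\mu_0$ to $\mu_1=\delta_y$, which has some constant speed $v:=\dw(\mu_0,\delta_y)$, could be extended to a geodesic $(\mu_t)_{t\in[0,1+\ep]}$ for some $\ep>0$. Since $\mu_0\neq\delta_y$ we have $v>0$. A constant-speed geodesic satisfies $\dw(\mu_s,\mu_t)=v|t-s|$ for all parameters, so passing through the intermediate point $\mu_1=\delta_y$ would force the triangle equality
\[ \dw(\mu_0,\mu_{1+\ep}) = \dw(\mu_0,\delta_y)+\dw(\delta_y,\mu_{1+\ep}) = v(1+\ep). \]
It therefore suffices to prove that this equality is impossible, i.e. that the triangle inequality through $\delta_y$ is \emph{strict}.

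The key step is to bound $\dw(\mu_0,\mu_{1+\ep})$ from above by testing with the independent coupling $\mu_0\otimes\mu_{1+\ep}$ and routing every pair of points through $y$. Writing $d$ for the distance on $Y$, the pointwise inequality $d(x,z)\leqslant d(x,y)+d(y,z)$ followed by Minkowski's inequality in $L^2(\mu_0\otimes\mu_{1+\ep})$ gives
\[ \dw(\mu_0,\mu_{1+\ep}) \leqslant \Big(\int d(x,z)^2\,\mu_0(dx)\,\mu_{1+\ep}(dz)\Big)^{1/2} \leqslant \|d(\cdot,y)\|_{L^2(\mu_0)} + \|d(y,\cdot)\|_{L^2(\mu_{1+\ep})}. \]
Because the only transport plan onto a Dirac mass is the trivial one, the two terms on the right are exactly $\dw(\mu_0,\delta_y)=v$ and $\dw(\delta_y,\mu_{1+\ep})=v\ep$, so the right-hand side equals $v(1+\ep)$. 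Combined with the equality forced above, every inequality in this chain must in fact be an equality.

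The contradiction then comes from the equality case of Minkowski's inequality. Equality forces the two nonnegative functions $(x,z)\mapsto d(x,y)$ and $(x,z)\mapsto d(y,z)$ to be proportional $\mu_0\otimes\mu_{1+\ep}$-almost everywhere. As the first depends only on $x$ and the second only on $z$, both must be almost everywhere constant; in particular $d(x,y)=r$ for $\mu_0$-almost every $x$, for some constant $r\geqslant 0$. But $y\in\supp\mu_0$ means $\mu_0(B(y,\delta))>0$ for every $\delta>0$, which is compatible with $d(\cdot,y)\equiv r$ only if $r=0$, i.e. $\mu_0=\delta_y$ --- contradicting the hypothesis that $\mu_0$ is not a Dirac mass. (The degenerate sub-cases of the equality analysis, $d(\cdot,y)\equiv 0$ or $d(y,\cdot)\equiv 0$, likewise force $\mu_0=\delta_y$ or $v\ep=0$, both excluded.)

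The computation itself is short; the only point requiring care is the equality analysis of Minkowski's inequality and the observation that it meshes with the support hypothesis precisely to rule out a positive constant $r$. I expect this to be the crux: the whole argument hinges on the fact that a point of the support carries mass arbitrarily close to it, so that $d(\cdot,y)$ cannot be essentially constant unless $\mu_0$ is concentrated at $y$. Notably, this proof uses neither a curvature assumption nor geodesic completeness, which is consistent with its intended role in the characterization of Dirac masses in full generality.
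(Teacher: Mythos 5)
Your proof is correct, and it takes a genuinely different route from the paper's. The paper argues dynamically: it lifts the extended geodesic $(\mu_t)_{t\in[0,1+\ep]}$ to a displacement interpolation, a measure $\mu$ on geodesics of $Y$; since $y\in\supp\mu_0$ and $\mu_1=\delta_y$, the support of $\mu$ contains a constant geodesic at $y$, so the induced plan from $\mu_0$ to $\mu_{1+\ep}$ has both $(y,y)$ and a pair $(y',y'')$ with $y$ lying between $y'$ and $y''$ on a geodesic in its support; swapping these pairs is strictly cheaper by convexity of the quadratic cost, contradicting cyclical monotonicity of optimal plans. You instead stay entirely at the level of the metric $\dw$: constant speed forces the triangle equality $\dw(\mu_0,\mu_{1+\ep})=\dw(\mu_0,\delta_y)+\dw(\delta_y,\mu_{1+\ep})$, which you contradict via the product coupling, the trivial-coupling property of Dirac masses, and the equality case of Minkowski (equivalently Cauchy--Schwarz) in $L^2(\mu_0\otimes\mu_{1+\ep})$; the hypothesis $y\in\supp\mu_0$ enters exactly where it should, to rule out $d(\cdot,y)$ being $\mu_0$-a.e.\ equal to a positive constant, and your treatment of the degenerate proportionality cases ($d(\cdot,y)\equiv 0$ or $d(y,\cdot)\equiv 0$) closes the remaining gaps. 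What each approach buys: yours is more elementary and self-contained --- no displacement interpolation and no structure theory of geodesics in $\wass(Y)$ is invoked, so it works verbatim for any Polish $Y$ --- while the paper's argument makes the geometric mechanism visible (mass sitting at $y$ blocks mass transiting through $y$, as in the paper's Figure 3) and meshes with the dynamical, geodesic-lifting framework already set up for the preceding extension lemma. Both proofs use $y\in\supp\mu_0$ in an essential way, consistent with the paper's remark that in branching spaces the lemma fails without it.
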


Note that at least in the branching case the assumption $y\in\supp\mu_0$ is 
needed.

\begin{proof}
Assume that there is a geodesic segment $(\mu_t)_{t\in [0,1+\varepsilon]}$
and let $\mu$ be one of its displacement interpolation. Since $y\in\supp\mu_0$, 
$\supp\mu$ contains a geodesic segment of $X$ that is at $y$ at times $0$ and
$1$, and must therefore be constant.

Since $\mu_0$ is not a Dirac mass, there is a $y'\neq y$ in $\supp\mu_0$.
Let $\gamma$ be a geodesic segment in $\supp\mu$ such that $\gamma_0=y'$.
Then $\gamma_1=y$ lies between $y'$ and $y'':=\gamma_{1+\varepsilon}$
on $\gamma$, and the transport plan $\Pi$ from $\mu_0$ to $\mu_{1+\varepsilon}$
defined by $\mu$ contains in its
support the couples $(y', y'')$ and $(y,y)$.

But then cyclical monotonicity shows that $\Pi$ is not optimal: it costs less
to move $y$ to $y''$ and $y'$ to $y$ by convexity of the cost
(see figure \ref{fig:aligned}).
The dynamical transport $\mu$ cannot be optimal either, a contradiction.
\end{proof}

\begin{figure}[htp]\begin{center}
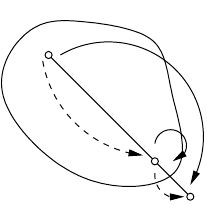
\caption{The transport shown with continuous arrows is less effective
than the transport given by the dashed arrows.}\label{fig:aligned}
\end{center}\end{figure}

We can now easily draw the consequences of these lemmas.

\begin{prop}
Let $Y,Z$ be geodesic Polish spaces and assume that $Y$ is geodesically complete and
locally compact.
Any isometry from $\wass(Y)$ to $\wass(Z)$ must map all Dirac masses to Dirac masses.
\end{prop}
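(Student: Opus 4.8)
The plan is to exploit the two preceding lemmas, which together single out Dirac masses by a purely metric property of the Wasserstein space, and then to observe that this property is preserved by any isometry. Call a point $p$ of a Wasserstein space \emph{extendable} if every geodesic segment issued from $p$ can be prolonged to a complete geodesic ray. Lemma \ref{lem:extension} says that in $\wass(Y)$ every Dirac mass is extendable (this is where the geodesic completeness and local compactness of $Y$ enter), while the second lemma says that in $\wass(Z)$ no measure other than a Dirac mass is extendable: if $\mu_0$ is not a Dirac mass, picking $y\in\supp\mu_0$ yields a geodesic from $\mu_0$ to $\delta_y$ that cannot be continued past time $1$. The key observation is that extendability is an isometry invariant: if $\Phi$ is an isometry and $p$ is extendable, then, since $\Phi$ carries the geodesics issued from $p$ bijectively onto the geodesics issued from $\Phi(p)$ while respecting which ones prolong, $\Phi(p)$ is extendable as well.

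With this vocabulary in hand the argument is immediate. I would let $\Phi\colon\wass(Y)\to\wass(Z)$ be an isometry, fix $x\in Y$, and set $\nu:=\Phi(\delta_x)$. By Lemma \ref{lem:extension} the point $\delta_x$ is extendable, hence so is $\nu$. Arguing by contradiction, suppose $\nu$ is not a Dirac mass. Choosing $y\in\supp\nu$ and using that $\wass(Z)$ is geodesic (because $Z$ is a geodesic Polish space), there is a geodesic segment from $\nu$ to $\delta_y$; by the second lemma this segment cannot be extended beyond time $1$. This contradicts the extendability of $\nu$, so $\nu$ must be a Dirac mass. Since $x$ was arbitrary, $\Phi$ maps every Dirac mass to a Dirac mass.

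There is no genuine analytic difficulty here: the two lemmas do all the work and the proposition is essentially a formal consequence of them. The only points I would need to verify with a little care are the facts underlying the notion of extendability, namely that an isometry between geodesic metric spaces sends geodesics to geodesics and respects prolongation, so that extendability indeed transfers along $\Phi$, and that $\wass(Z)$ is itself geodesic so that the geodesic invoked in the second lemma genuinely exists. Both are standard, the latter being a classical consequence of $Z$ being geodesic. I therefore expect the write-up to be short, the main conceptual content being the identification of ``extendability from a point'' as the metric fingerprint that distinguishes Dirac masses from all other measures.
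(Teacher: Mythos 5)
Your proposal is correct and follows essentially the same route as the paper: the paper likewise combines the extension lemma for segments issued from Dirac masses with the non-extendability lemma for non-Dirac masses, transporting the obstruction through the isometry (the paper phrases it by pulling the problematic segment back via $\varphi^{-1}$, extending it, and pushing forward, which is exactly your ``extendability is an isometry invariant'' observation). Your explicit remark that $\wass(Z)$ must be geodesic so that the segment from $\nu$ to $\delta_y$ exists is a point the paper leaves implicit, but it is the same proof.
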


Except in the next corollary, we shall use this result with $Y=Z=X$.
Note that we will need to require $X$ being geodesically complete in addition
to the Hadamard hypothesis.

\begin{proof}
Denote by $\varphi$ an isometry $\wass(Y)\to\wass(Z)$ and consider any
$x\in Y$.
If $\varphi(\delta_x)$ were not a Dirac mass, there would exist
a geodesic segment $(\mu_t)_{t\in[0,1]}$ from $\varphi(\delta_x)$ to a Dirac mass (at a point 
$y\in\supp  \varphi(\delta_x)$) that cannot be extended for times $t>1$.

But $\varphi^{-1}(\mu_t)$ gives a geodesic segment issued from $\delta_x$,
that can therefore be extended. This is a contradiction since $\varphi$ is an isometry.
\end{proof}

We can now prove Theorem \ref{theo:side}, which we recall:
let $Y,Z$ be geodesic, geodesically complete Polish spaces and assume that $Y$ is locally compact;
then $\wass(Y)$ is isometric to $\wass(Z)$ if and only if $Y$ is isometric to $Z$.

\begin{proof}[Proof of Theorem \ref{theo:side}]
Let $\varphi$ be an isometry $\wass(Y)\to \wass(Z)$. Then $\varphi$
maps Dirac masses to Dirac masses, and since the set of Dirac masses
of a space is canonically isometric to the space, $\varphi$ induces an isometric embedding $Y\to Z$. It suffices to prove that this isometric embedding is onto to deduce that $Y$ and $Z$ are isometric. Assume otherwise; then there exist $z_0\in Z$ such that $\mu_0:=\varphi^{-1}(\delta_{z_0})$ is not a Dirac mass. Pick any $y\in\supp\mu_{z_0}$ and let $z_1$ be such that $\varphi(\delta_y)=\delta_{z_1}$. Since $Z$ is geodesically complete, there is a complete geodesic $(z_t)_{t\in\mathbb{R}}$ in $Z$ passing through $z_0$ and $z_1$. Then $(\varphi^{-1}(\delta_{z_t}))_{t\in\mathbb{R}}$ is a geodesic in $\wass(Y)$ passing through $\mu_0$ and $\delta_y$, in contradiction with Lemma \ref{lemm:no-ext}. Therefore, if $\wass(Y)$ and $\wass(Z)$ are isometric, then so are $Y$ and $Z$.
The converse implication is obvious.
\end{proof}

Note that we do not know whether this result holds for general
metric spaces.
Also, we do not know if there is a space $Y$ and an isometry
of $\wass(Y)$ that maps some Dirac mass to a measure that is not a Dirac mass.

%%%%%%%%%%%%%%%%%%%%%%%%%%%%%%%%%%%%%%%%%%%%%%%%%%%%%%%%%%%%%%%%%%%%%%%%%%%%%%%%%
\subsection{Measures supported on a geodesic}

The characterization of measures supported on a geodesic relies on the following argument:
when dilated from a point of the geodesic, such a measure has Euclidean expansion.

\begin{lemm}
Assume that $X$ is negatively curved,
and let $\gamma$ be a maximal geodesic of $X$, $\mu$ be in $\wass(X)$.
Given a point $x\in\gamma$, denote by $(x^t\cdot\mu)_{t\in[0,1]}$ the geodesic segment from 
$\delta_x$ to $\mu$. 

The measure $\mu$ is supported on $\gamma$ if and only if for all
$x,g\in\gamma$
\[\dw(x^{\frac12}\cdot\mu,x^{\frac12}\cdot\delta_g)=\frac12\dw(\mu,\delta_g).\]
\end{lemm}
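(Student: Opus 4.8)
The plan is to translate the stated metric identity into an integral equality that is controlled by a pointwise comparison of midpoints, and then to read off the support condition from the equality case of that comparison. First I would unravel the notation. Since there is a unique transport plan from a Dirac mass (as used in Lemma~\ref{lem:extension}), the displacement interpolation from $\delta_x$ to $\mu$ is the push-forward of $\mu$ under the midpoint map $M_x\colon y\mapsto m_{x,y}$, where $m_{x,y}$ denotes the midpoint of the unique geodesic $[x,y]$; thus $x^{\frac12}\cdot\mu=(M_x)_\#\mu$ and, applied to a Dirac mass, $x^{\frac12}\cdot\delta_g=\delta_{m_{x,g}}$. Because the squared distance to a Dirac mass is an integral, squaring the stated equality turns it into
\[
\int_X d^2\big(m_{x,y},m_{x,g}\big)\,\mu(dy)=\frac14\int_X d^2(y,g)\,\mu(dy).
\]

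Second, and this is the heart of the matter, I would establish the pointwise comparison
\[
d\big(m_{x,y},m_{x,g}\big)\leqslant\tfrac12 d(y,g),
\]
valid for all $x,y,g$, together with the fact that equality holds if and only if $x,y,g$ are aligned. The inequality follows by applying \eqref{eq:cat0} twice: once to the geodesic $[x,y]$ with third point $g$ to bound $d^2(g,m_{x,y})$, and once to the geodesic $[x,g]$ with third point $m_{x,y}$; substituting the first bound into the second and using $d(x,m_{x,y})=\frac12 d(x,y)$ collapses the right-hand side exactly to $\frac14 d^2(y,g)$. The crucial point is the equality case: negative curvature means \eqref{eq:cat0} is strict unless all three vertices are aligned, so the first application is already strict whenever $x,y,g$ are not aligned, forcing $d(m_{x,y},m_{x,g})<\frac12 d(y,g)$. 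Conversely, if $x,y,g$ lie on a common geodesic the configuration is isometric to one in $\R$, where the midpoint identity holds with equality. I expect this equality analysis to be the main obstacle, since it is precisely where the strict $\CAT$ hypothesis is indispensable; the statement genuinely fails in the Euclidean case.

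Finally I would deduce both implications. For the ``only if'' direction, if $\supp\mu\subseteq\gamma$ then for every $x,g\in\gamma$ and every $y\in\supp\mu$ the three points lie on $\gamma$, hence are aligned, so the two integrands agree pointwise and the displayed integral identity holds for all $x,g$. For the ``if'' direction, fix two distinct points $x,g\in\gamma$; since the integrand on the left is everywhere $\leqslant$ the integrand on the right, the integral equality forces equality $\mu$-almost everywhere, i.e.\ $x,y,g$ are aligned for $\mu$-a.e.\ $y$. As $\gamma$ is the unique complete geodesic through the distinct points $x,g$ (no branching occurs under strict negative curvature), alignment gives $y\in\gamma$; hence $\mu(\gamma)=1$, and since $\gamma$ is closed we conclude $\supp\mu\subseteq\gamma$.
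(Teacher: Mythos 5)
Most of your argument is sound and in fact runs parallel to the paper's proof: the identification $x^{\frac12}\cdot\mu=(M_x)_\#\mu$, the reduction of the metric identity to an integral identity (legitimate because the only plan to a Dirac mass is the product plan), and the midpoint comparison $d(m_{x,y},m_{x,g})\leqslant\frac12 d(y,g)$ with strictness for non-aligned triples are exactly the paper's inequality \eqref{eq:thales}; your derivation by applying \eqref{eq:cat0} twice, and your observation that strictness of the \emph{first} application suffices, are both correct.

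The genuine gap is in your last step: the claim that ``no branching occurs under strict negative curvature,'' so that $\gamma$ is the unique complete geodesic through $x,g$ and alignment of $x,y,g$ forces $y\in\gamma$. The paper's notion of negatively curved (strict $\CAT(0)$ inequality except for aligned triples) explicitly \emph{includes} branching spaces --- simplicial trees without leaves are the motivating example, the paper devotes its appendix to them, and the proof of this very lemma contains a parenthetical about ``the branching case.'' In a tree, a point $y\notin\gamma$ hanging off a branch vertex $b\in\gamma$ is aligned with \emph{every} pair $x,g\in\gamma$ for which $x$ (or $g$) lies on the segment of $\gamma$ between $b$ and the other point: the concatenation $y\to b\to g$ is a genuine geodesic through $x$. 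So for such pairs your $\mu$-a.e.\ alignment conclusion does not place $y$ on $\gamma$, and your proof only establishes the lemma for non-branching spaces such as Hadamard manifolds. There is also a quantifier subtlety you would need to handle even after fixing this: the $\mu$-null set of non-aligned points depends on the pair $(x,g)$. The repair is the paper's route: argue by contradiction, pick $y_0\in\supp\mu\setminus\gamma$, and use the maximality of $\gamma$ to choose \emph{one} pair $x,g\in\gamma$ for which $x,y_0,g$ are not aligned (in a tree, take $x$ and $g$ strictly on opposite sides of $p_\gamma(y_0)$); then the pointwise inequality is strict at $y_0$, hence by continuity of the midpoint maps it is strict with a uniform gap on a neighborhood of $y_0$, which has positive $\mu$-measure since $y_0\in\supp\mu$, contradicting the integral equality for that single pair.
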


\begin{proof}
If $x, y$ are points of $X$, $(x+y)/2$ denotes the midpoint of $x$ and $y$. The ``only if'' part is obvious since the transport problem on a convex
subset of $X$ only involves the induced metric on this subset, which here
is isometric to an interval.

To prove the ``if'' part, first notice that the CAT(0) inequality
in a triangle $(x,y,g)$ yields the Thales inequality
$d((x+y)/2,(x+g)/2)\leqslant\frac12 d(y,g)$, so that by direct integration
\begin{equation}
\dw(x^{\frac12}\cdot\mu,\delta_{(x+g)/2})\leqslant\frac12\dw(\mu,\delta_g).
\label{eq:thales}
\end{equation}
Note also that $x^{\frac12}\cdot\delta_g=\delta_{(x+g)/2}$.

Assume now that $\mu$ is not supported on $\gamma$ 
and let $y\in\supp\mu\setminus\gamma$, and $x,g\in\gamma$
such that $x,y,g$ are not aligned. Since $\gamma$ is maximal, this is possible, for example by taking
$d(x,g)>d(x,y)$ in the branching case (any choice of $x\neq g$ would do
in the non-branching case). Since $X$ is negatively curved, we get
that $d((x+y)/2,(x+g)/2)<\frac12 d(y,g)$ so that
\eqref{eq:thales} is strict (see Figure \ref{fig:geodsupported}).
\end{proof}

\begin{figure}[htp]\begin{center}
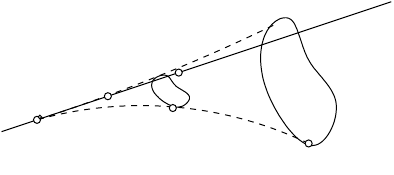
\caption{In negative curvature,
  midpoints are closer than they would be in Euclidean space.}\label{fig:geodsupported}
\end{center}\end{figure}

\begin{coro}
If $X$ is a negatively curved Hadamard space and $\gamma$ is
a maximal geodesic of $X$, any isometry
of $\wass(X)$ that fixes Dirac masses must
preserve the subset $\wass(\gamma)$ of measures supported on $\gamma$,
and therefore induces an isometry on this set.
\end{coro}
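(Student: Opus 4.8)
The plan is to exploit the fact that the characterization in the preceding lemma is \emph{purely metric}: membership of $\mu$ in $\wass(\gamma)$ is detected only through Wasserstein distances between $\mu$, the Dirac masses $\delta_g$ with $g\in\gamma$, and the midpoints $x^{\frac12}\cdot\mu$ and $x^{\frac12}\cdot\delta_g$ with $x\in\gamma$. An isometry fixing all Dirac masses preserves each of these ingredients, so it should preserve the characterization, hence the set $\wass(\gamma)$.

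The crucial point is the interaction between such an isometry $\Phi$ and the dilation operation $\mu\mapsto x^{t}\cdot\mu$. First I would record that this operation is well defined: as already used in the proof of Lemma~\ref{lem:extension}, there is only one transport plan from a Dirac mass, so the geodesic segment from $\delta_x$ to $\mu$ is unique and $x^{t}\cdot\mu$ is unambiguous. Now $\Phi$, being an isometry, maps the segment from $\delta_x$ to $\mu$ onto a geodesic segment from $\Phi(\delta_x)$ to $\Phi(\mu)$, respecting the affine parameter. When $\Phi(\delta_x)=\delta_x$, uniqueness of the segment issued from the Dirac mass $\delta_x$ forces the intertwining relation
\[\Phi(x^{t}\cdot\mu)=x^{t}\cdot\Phi(\mu)\qquad\text{for all }x\in\gamma,\ t\in[0,1].\]

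With this relation at hand, the conclusion follows by transporting the characterizing equality through $\Phi$. Let $\mu\in\wass(\gamma)$; by the preceding lemma, $\dw(x^{\frac12}\cdot\mu,x^{\frac12}\cdot\delta_g)=\frac12\dw(\mu,\delta_g)$ for all $x,g\in\gamma$. Applying $\Phi$ and using both the intertwining relation and $\Phi(\delta_g)=\delta_g$, the left-hand side equals $\dw(x^{\frac12}\cdot\Phi(\mu),x^{\frac12}\cdot\delta_g)$, while on the right $\dw(\mu,\delta_g)=\dw(\Phi(\mu),\delta_g)$. Thus $\Phi(\mu)$ satisfies the very same metric characterization, and the preceding lemma gives $\Phi(\mu)\in\wass(\gamma)$. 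Running the identical argument for $\Phi^{-1}$, which also fixes Dirac masses, shows $\Phi^{-1}(\wass(\gamma))\subseteq\wass(\gamma)$, so that $\Phi$ restricts to a bijection, hence an isometry, of $\wass(\gamma)$.

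I expect the only delicate step to be the justification of the intertwining relation, namely that $\Phi$ genuinely commutes with dilation from a fixed Dirac mass; everything hinges on the uniqueness of geodesics issued from Dirac masses, and once this is granted the remainder is a formal transport of the characterizing equality through $\Phi$.
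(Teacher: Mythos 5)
Your proof is correct and takes essentially the same route as the paper: both arguments hinge on the intertwining relation $\Phi(x^{\frac12}\cdot\mu)=x^{\frac12}\cdot\Phi(\mu)$ (justified by uniqueness of geodesics issued from Dirac masses) together with the metric characterization of the preceding lemma, the paper merely phrasing the step as a contradiction (a strict inequality for $\varphi(\mu)$ pulled back through $\varphi$) where you transport the characterizing equality forward. Your explicit treatment of $\Phi^{-1}$ to get a bijection, hence an induced isometry of $\wass(\gamma)$, is a detail the paper leaves implicit.
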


\begin{proof}
Let $\varphi$ be an isometry of $\wass(X)$ that fixes Dirac masses.
Let $\gamma$ be a maximal geodesic of $X$ and $\mu\in \wass(X)$ be 
supported on $\gamma$. If $\varphi(\mu)$ were not supported in $\gamma$,
there would exist $x,g\in\gamma$ such that 
$\dw(x^{\frac12}\cdot\varphi(\mu),\delta_{(x+g)/2})<\frac12\dw(\varphi(\mu),\delta_g)$.
But $\varphi$ is an isometry and $\varphi^{-1}(\delta_x)=\delta_x$,
$\varphi^{-1}(\delta_g)=\delta_g$, $\varphi^{-1}(\delta_{(x+g)/2})=\delta_{(x+g)/2}$
so that also $x^{\frac12}\cdot\varphi(\mu)=\varphi(x^{\frac12}\cdot\mu)$.
As a consequence, \eqref{eq:thales} would be a strict inequality too, a contradiction.
\end{proof}

%%%%%%%%%%%%%%%%%%%%%%%%%%%%%%%%%%%%%%%%%%%%%%%%%%%%%%%%%%%%%%%%%%%%%%%%%%%%%%%%%
\subsection{Isometry induced on a geodesic}

We want to deduce from the previous section that an isometry that fixes all Dirac masses must also fix every
geodesically-supported measure. To this end, we have to show that the isometry
induced on the measures supported in a given geodesic is trivial, in other terms to
rule out the other possibilities exhibited in \cite[Section 5]{Kloeckner}, at which it is recommended
to take a look before reading the proof below.

\begin{prop}\label{prop:geod_supported}
Assume that $X$ is a geodesically complete, negatively curved Hadamard space and let $\varphi$
be an isometry of $\wass(X)$ that fixes all Dirac masses. For all complete geodesics
$\gamma$ of $X$, the isometry induced by $\varphi$
on $\wass(\gamma)$ is the identity.
\end{prop}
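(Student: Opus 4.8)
The plan is to reduce the statement to the classification of isometries of $\wass(\R)$ obtained in \cite[Section 5]{Kloeckner} and then to eliminate the surviving exotic possibilities using the ambient negative curvature. By the previous corollary, $\varphi$ preserves $\wass(\gamma)$, hence induces an isometry $\Psi$ of $\wass(\gamma)$; and since $X$ is geodesically complete, the complete geodesic $\gamma$ is isometric to $\R$, so that $\wass(\gamma)$ is isometric to $\wass(\R)$. Moreover $\Psi$ fixes every Dirac mass $\delta_y$ with $y\in\gamma$, because $\varphi$ does. According to the classification in \cite{Kloeckner}, an isometry of $\wass(\R)$ fixing all Dirac masses is either the identity or a nontrivial member of the flow of exotic isometries exhibited there; each such nontrivial isometry preserves the mean and the second moment of every measure (equivalently all the intrinsic distances $\dw(\cdot,\delta_g)$, $g\in\gamma$) but changes the shape of some measure. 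It therefore suffices to prove that $\Psi$ cannot be a nontrivial exotic isometry.

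The leverage not available inside $\wass(\gamma)$ itself comes from the Dirac masses sitting \emph{off} $\gamma$. Since $\varphi$ fixes every $\delta_z$, $z\in X$, and is a global isometry, for any $\mu$ supported on $\gamma$, writing $\nu:=\varphi(\mu)=\Psi(\mu)$ (again supported on $\gamma$), we must have
\[\int_\gamma d^2(z,y)\,\mu(dy)=\dw^2(\mu,\delta_z)=\dw^2(\nu,\delta_z)=\int_\gamma d^2(z,y)\,\nu(dy)\qquad\text{for all }z\in X.\]
For $z\in\gamma$ this merely re-expresses that $\mu$ and $\nu$ share mean and second moment, which we already knew; the genuine constraint is carried by the points $z\notin\gamma$.

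This is exactly where negative curvature enters and the argument parts ways with the Euclidean one. Fix $z\notin\gamma$, let $p$ be its nearest-point projection onto $\gamma$, say at arclength parameter $y_0$, and set $h=d(z,\gamma)$. The standard projection inequality in $\CAT(0)$ gives $d^2(z,\gamma(y))\geqslant h^2+(y-y_0)^2=:q_z(y)$, and strict negative curvature makes the defect $r_z(y):=d^2(z,\gamma(y))-q_z(y)$ strictly positive for $y\neq y_0$. Because $q_z$ is quadratic in $y$, it is paired identically against $\mu$ and $\nu$; hence the displayed equalities reduce to $\int_\gamma r_z\,d\mu=\int_\gamma r_z\,d\nu$ for every $z\notin\gamma$. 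In the Euclidean case one has $r_z\equiv0$, which is precisely why the exotic flow persists there; in strictly negative curvature the defects $r_z$ are nonzero and genuinely non-quadratic, so they probe the measure beyond its first two moments. The plan is to combine the explicit description of the exotic flow from \cite{Kloeckner} with the positivity and convexity of the defects $r_z$ to exhibit a single measure $\mu$ on $\gamma$ and a single point $z\notin\gamma$ for which $\int_\gamma r_z\,d(\Psi\mu)\neq\int_\gamma r_z\,d\mu$ whenever $\Psi\neq\id$, contradicting the above.

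The main obstacle is precisely this final matching: showing that the curved functions $y\mapsto r_z(y)$ detect the shape change produced by the exotic flow. This is a restricted instance of the Radon-transform injectivity that governs the remainder of the paper, but the task here is easier, since we need only rule out a one-parameter family rather than invert the transform in full — it is enough to produce one detecting pair $(\mu,z)$. Controlling the defect $r_z$ quantitatively — most safely by working within the convex hull of $\gamma\cup\{z\}$ and invoking the strict $\CAT(0)$ inequality, rather than assuming the existence of a totally geodesic plane — is the delicate point, and is what forces the negative-curvature hypothesis.
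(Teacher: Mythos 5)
Your reduction is sound as far as it goes, and it is a genuinely different route from the paper's: since $\varphi$ fixes every Dirac mass and transport to a Dirac mass has a unique plan, you correctly obtain $\int_\gamma d^2(p,y)\,\mu(dy)=\int_\gamma d^2(p,y)\,(\Psi\mu)(dy)$ for every $p\in X$, and the decomposition $d^2(p,\gamma(y))=h^2+(y-y_0)^2+r_p(y)$ correctly isolates the non-Euclidean information in the defects $r_p$, $p\notin\gamma$. But the proposal stops exactly where the mathematical content lies. You write that ``the plan is to \ldots exhibit a single measure $\mu$ \ldots and a single point $z$'' detecting every nontrivial $\Psi$, and you yourself flag this matching as ``the main obstacle'' --- it is never carried out. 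Positivity and convexity of $r_p$ do not by themselves detect anything: a defect that happened to be quadratic on the relevant supports (in the Euclidean case $r_p\equiv 0$, which is why the exotic flow survives there) is blind to any map preserving the first two moments, so you must actually prove that the family $\{r_p\}_{p\notin\gamma}$ separates a measure from its exotic images --- a restricted injectivity statement of the same nature as the Radon-transform inversion occupying the rest of the paper. A second concrete gap: your recollection of the classification in \cite{Kloeckner} is wrong in a way that matters. The isometries of $\wass(\R)$ fixing all Dirac masses are not exhausted by the exotic flow; there is also a nontrivial \emph{involution}, which the paper's proof treats by a separate argument and which your text never mentions. Since the involution also preserves mean and variance, it passes every quadratic test, and even a completed argument for the flow would leave the proposition unproved.

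For comparison, the paper avoids any inversion problem: assuming $\Psi$ exotic, it takes $\mu_n=\varphi^n(\frac12\delta_y+\frac12\delta_z)=m_n\delta_{y_n}+(1-m_n)\delta_{z_n}$ with $m_n\to 0$, $y_n\to\infty$, $z_n\to g=(y+z)/2$, fixes $x\notin\gamma$, and uses the corollary on geodesically supported measures to force the midpoints $(x+y_n)/2$, $(x+z_n)/2$ onto the complete geodesic $\gamma'$ through $(x+y)/2$ and $(x+z)/2$; letting $y,z$ vary, the map $y\mapsto(x+y)/2$ is affine from $\gamma$ to $\gamma'$, so the two geodesics are parallel and bound a flat strip, contradicting negative curvature, with a parallel argument for the involution. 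Your setup can in fact be closed for the flow case in the spirit you propose: mean preservation gives $m_ny_n\to 0$ while $z_n\to g$, the triangle inequality gives $0\le r_p(y)\le 2h|y-y_0|$ (linear growth), hence $\int r_p\,d\mu_n\to r_p(g)$; on the other hand the strict $\CAT(0)$ inequality applied to the nondegenerate triangles $(p,\gamma(s),\gamma(t))$ (nondegenerate precisely because $p\notin\gamma$) makes $y\mapsto d^2(p,\gamma(y))-(y-y_0)^2$ strictly midpoint-convex, so $r_p(g)<\frac12 r_p(y)+\frac12 r_p(z)=\int r_p\,d\mu_0$, contradicting the constancy of $\int r_p\,d\mu_n$ along the orbit. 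But neither this computation, nor its prerequisite (strict convexity of the defect, which you only gesture at as ``the delicate point''), nor the involution case appears in your text; as written, the proposal is a correct reduction followed by an unproven claim, and so does not constitute a proof.
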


We only address the geodesically complete case for simplicity, but the same result
probably  holds in more generality.

\begin{proof}
Let $x$ be a point not lying on $\gamma$. Such a point exists since $X$ is negatively curved,
hence not a line.

First assume that $\varphi$ induces on $\wass(\gamma)$ an exotic isometry.
Let $y,z$ be two points of $\gamma$ and define $\mu_0=\frac12\delta_y+\frac12\delta_z$.
Then $\mu_n:=\varphi^n(\mu_0)$ has the form $m_n\delta_{y_n}+(1-m_n)\delta_{z_n}$
where $m_n\to 0$, $y_n\to\infty$, $z_n\to(y+z)/2=:g$.

Let now $\gamma'$ be a complete geodesic that contains $y'=(x+y)/2$ and $z'=(x+z)/2$
(see Figure \ref{fig:exotic}). Since
the midpoint of $\mu_0$ and $\delta_x$ is supported on $\gamma'$, so is the midpoint
of $\mu_n$ and $\delta_x$. This means that $(x+y_n)/2$ and $(x+z_n)/2$ lie on
$\gamma'$, and the former goes to infinity. This shows that $\varphi$ also
induces an exotic isometry on $\wass(\gamma')$, thus that $(x+z_n)/2$ goes to
the midpoint $g'$ of $y'$ and $z'$. But we already know that $(x+z_n)/2$ tends
to the midpoint of $x$ and $g$, which must therefore be $g'$. 

\begin{figure}[htp]\begin{center}
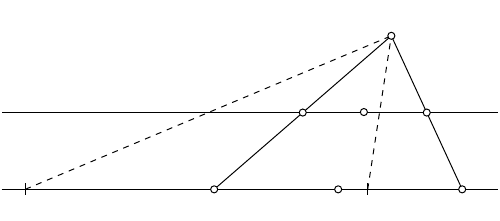
\caption{That an isometry acts exotically on a geodesic $\gamma$ would imply
  that there is another geodesic $\gamma'$ such that taking midpoints with $x$ is 
  an affine map. Then $\gamma$ and $\gamma'$ would be parallel, therefore they would
  bound a flat strip.}
  \label{fig:exotic}
\end{center}\end{figure}

Since this holds
for all choices of $y$ and $z$, we see that the map $y\to(x+y)/2$ maps
affinely $\gamma$ to $\gamma'$. But the geodesic segment $[x\gamma_t]$ converges
when $t\to\pm\infty$ to geodesic segments asymptotic to $\gamma$ and $-\gamma$ respectively.
It follows that $\gamma'$ is parallel to $\gamma$, so that they must bound a flat strip
(see \cite{Ballmann}). But this is forbidden by the negative curvature assumption.

A similar argument can be worked out in the case when $\varphi$ induces an involution:
given $y,z\in\gamma$ and their midpoint $g$,
one can find measures $\mu_n$ supported on $y$ and $y_n$, where $y_n\to g$ and $\mu_n$ has more mass on
$y_n$ than on $y$, such that $\varphi(\mu_n)$
is supported on $z$ and a point $z_n$ of $\gamma$, with more mass on $z_n$. It follows
that the midpoints $y',y'_n,z'$ with $x$ are on a line, and we get the same contradiction
as before.

The classification of isometries of $\wass(\mathbb{R})$ shows that if $\varphi$ fixes
Dirac masses and is neither an exotic isometry nor an involution, then it is the identity.
\end{proof}

Now we are able to link the isometric rigidity of $\wass(X)$ to the injectivity of a Radon
transform. The following definition relies on the following observation: since a geodesic is convex
and $X$ is Hadamard, given a point $y$ and a geodesic $\gamma$ there is 
a unique point $p_\gamma(y)\in\gamma$ closest to $y$, called the projection of $y$ to $\gamma$.

\begin{defi}
When $X$ is geodesically complete,
we define the \emph{perpendicular Radon transform} $\radon\mu$
of a measure $\mu\in\wass(X)$ as the following map defined over 
complete geodesics $\gamma$ of $X$:
\[\radon\mu(\gamma)=(p_\gamma)_\#\mu.\]
In other words, this Radon transform recalls all the projections of a measure
on geodesics.
\end{defi}

The following result is now a direct consequence of Proposition \ref{prop:geod_supported}.
\begin{prop}\label{prop:inversion}
Assume that $X$ is geodesically complete and negatively curved.
If there is a dense subset $A\subset \wass(X)$ such that
for all $\mu\in\wass(X)$ and all $\nu\in A$, we have
\[\radon\mu=\radon\nu \Rightarrow \mu=\nu\]
then $\wass(X)$ is isometrically rigid.
\end{prop}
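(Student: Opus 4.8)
The plan is to show that every isometry $\Phi$ of $\wass(X)$ lies in the image of $\#$. Applying the proposition on Dirac masses to both $\Phi$ and $\Phi^{-1}$ (legitimate since $X$ is a geodesically complete, locally compact Hadamard space), $\Phi$ restricts to a bijective isometry of the set of Dirac masses, which is canonically isometric to $X$; let $\varphi\in\isom X$ be the corresponding isometry. Then $\Psi:=(\varphi_\#)^{-1}\circ\Phi$ is an isometry of $\wass(X)$ fixing every Dirac mass, and it suffices to prove $\Psi=\id$, for then $\Phi=\varphi_\#$ is trivial. By the corollary on measures supported on a geodesic together with Proposition \ref{prop:geod_supported}, $\Psi$ fixes $\wass(\gamma)$ pointwise for every complete geodesic $\gamma$.

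The heart of the argument is a purely metric characterization of the perpendicular Radon transform: for each complete geodesic $\gamma$, the measure $\radon\mu(\gamma)=(p_\gamma)_\#\mu$ is the \emph{unique nearest point} of the convex set $\wass(\gamma)$ to $\mu$. I would prove this by optimal transport. Using the transport plan $(\id,p_\gamma)_\#\mu$ and the pointwise inequality $d(y,z)\geqslant d(y,p_\gamma(y))$, valid for every $z\in\gamma$ (the first marginal being $\mu$), one obtains for every $\nu\in\wass(\gamma)$ that $\dw^2(\mu,\nu)\geqslant\int d^2(y,p_\gamma(y))\,\mu(dy)\geqslant\dw^2(\mu,(p_\gamma)_\#\mu)$, so that $(p_\gamma)_\#\mu$ realizes the minimal distance. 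For uniqueness, if $\nu$ also achieves the minimum then any optimal plan between $\mu$ and $\nu$ must saturate the inequality $d(y,z)\geqslant d(y,p_\gamma(y))$ almost everywhere; since the nearest-point projection onto $\gamma$ is unique in the Hadamard space $X$, this forces the plan to be carried by the graph of $p_\gamma$, whence $\nu=(p_\gamma)_\#\mu$.

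With this characterization in hand, the rest is formal. Because $\Psi$ is an isometry preserving $\wass(\gamma)$ (indeed fixing it pointwise), it carries the unique nearest point of $\wass(\gamma)$ to $\mu$ onto the unique nearest point of $\wass(\gamma)$ to $\Psi\mu$; the former is $\radon\mu(\gamma)$, which is fixed by $\Psi$, while the latter is $\radon(\Psi\mu)(\gamma)$. Hence $\radon(\Psi\mu)(\gamma)=\radon\mu(\gamma)$ for every $\gamma$, that is $\radon(\Psi\mu)=\radon\mu$ for all $\mu$. Taking $\mu=\Psi\nu$ for $\nu\in A$ gives $\radon(\Psi\nu)=\radon\nu$, and the injectivity hypothesis (applicable precisely because $\nu\in A$) yields $\Psi\nu=\nu$. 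Thus $\Psi$ fixes the dense set $A$; being continuous, $\Psi=\id$, and $\wass(X)$ is isometrically rigid.

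The only genuinely substantial step is the projection lemma, and within it the uniqueness statement. Here one should note that $\wass(X)$ is in general \emph{not} $\CAT(0)$ (it is already positively curved for $X=\R^2$), so uniqueness of the nearest point cannot be invoked abstractly from a curvature bound on the Wasserstein space; it must instead be extracted from the structure of optimal plans together with the uniqueness of nearest-point projection in $X$ itself, as above.
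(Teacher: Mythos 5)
Your proof is correct and follows essentially the same route as the paper: reduce to an isometry fixing all Dirac masses, invoke Proposition \ref{prop:geod_supported} (via the corollary on $\wass(\gamma)$) so that each $\wass(\gamma)$ is fixed pointwise, identify $\radon\mu(\gamma)=(p_\gamma)_\#\mu$ as the nearest point of $\wass(\gamma)$ to $\mu$, and conclude on the dense set $A$ by continuity. The only difference is that you spell out a full proof of the nearest-point characterization, including the uniqueness step via optimal plans carried by the graph of $p_\gamma$ (rightly noting that it cannot be deduced from a curvature bound on $\wass(X)$), whereas the paper asserts this claim in a single clause.
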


\begin{proof}
Let $\varphi$ be an isometry of $\wass(X)$. Up to composing with an element
of the image of $\#$, we can assume that $\varphi$ acts trivially on Dirac masses.
Then it acts trivially on geodesically supported measures. For all $\nu\in A$,
since $(p_\gamma)_\#\nu$ is the measure supported on $\gamma$ closest to $\nu$,
one has $\radon\varphi(\nu)=\radon\nu$, hence $\varphi(\nu)=\nu$. We just proved
that $\varphi$ acts trivially on a dense set, so that it must be the identity.
\end{proof}

%%%%%%%%%%%%%%%%%%%%%%%%%%%%%%%%%%%%%%%%%%%%%%%%%%%%%%%%%%%%%%%%%%%%%%%%%%%%%%%%%%%
\section{Injectivity of the Radon transform}

The proof of Theorem \ref{theo:intro:isometry} shall be complete as soon
as we get the injectivity required in Proposition \ref{prop:inversion}.
Note that in the case of the real
hyperbolic space $\RH^n$, we could use the usual Radon transform on the set of
compactly supported measures with smooth density to get it (see \cite{Helgason}).

%%%%%%%%%%%%
\subsection{The case of manifolds and their siblings}\label{blurb}

Let us first give an argument that does the job for all manifolds. We shall give a more
general, but somewhat more involved argument afterwards.
\begin{prop}
Assume that $X$ is a Hadamard smooth manifold . Let $A$ be the set of finitely supported
measures. For all $\mu\in\wass(X)$ and all $\nu\in A$, if $\radon\mu=\radon\nu$
then $\mu=\nu$.
\end{prop}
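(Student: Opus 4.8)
The plan is to exploit that if $\nu=\sum_{i=1}^n a_i\delta_{x_i}$ (with the $x_i$ distinct and $a_i>0$), then for \emph{every} complete geodesic $\gamma$ the hypothesis gives
$(p_\gamma)_\#\mu=\radon\mu(\gamma)=\radon\nu(\gamma)=\sum_i a_i\delta_{p_\gamma(x_i)}$,
a measure supported on the finite set $S_\gamma:=\{p_\gamma(x_1),\dots,p_\gamma(x_n)\}$. First I would deduce a support constraint on $\mu$. Since $p_\gamma$ is continuous and $S_\gamma$ is closed, $p_\gamma^{-1}(S_\gamma)$ is closed and carries all the mass of $\mu$, whence $\supp\mu\subseteq p_\gamma^{-1}(S_\gamma)=\bigcup_i p_\gamma^{-1}(p_\gamma(x_i))$. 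In words, $\supp\mu$ lies in the union of the $n$ hypersurfaces perpendicular to $\gamma$ passing through the $x_i$, and this holds for all $\gamma$ at once.

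Next I would intersect these constraints over all $\gamma$ to pin down the support. Take $y\in\supp\mu$ and suppose $y\neq x_i$ for every $i$. Choosing a complete geodesic $\gamma$ through $y$ (possible by geodesic completeness) gives $p_\gamma(y)=y$, so the constraint forces $p_\gamma(x_i)=y$ for some $i$. On a smooth Hadamard manifold the function $t\mapsto d^2(x_i,\gamma(t))$ is smooth and convex with a unique minimiser at $p_\gamma(x_i)$; by the first variation formula, $p_\gamma(x_i)=y$ at an interior point of $\gamma$ occurs exactly when the initial direction $w_i\in T_yX$ of the geodesic from $y$ to $x_i$ is orthogonal to $v:=\dot\gamma(y)$. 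As $y\neq x_i$ we have $w_i\neq0$, so $\{v\in U_y:\langle v,w_i\rangle=0\}$ is a great subsphere of codimension one in the unit sphere $U_y\subset T_yX$. When $\dim X\geq2$ the finite union of these subspheres cannot exhaust $U_y$, so I may pick the direction $v$ of $\gamma$ with $p_\gamma(x_i)\neq y$ for all $i$, a contradiction. (The remaining case $\dim X=1$, i.e. $X=\R$, is trivial since $\radon$ is then the identity.) Hence $\supp\mu\subseteq\{x_1,\dots,x_n\}$ and $\mu=\sum_i b_i\delta_{x_i}$ for some weights $b_i\geq0$.

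It then remains to identify the weights. For each fixed index $k$ I would rerun the same avoidance argument at $y=x_k$: choose a complete geodesic $\gamma_k$ through $x_k$ whose direction is orthogonal to none of the finitely many directions $x_k\to x_i$ ($i\neq k$). Then $p_{\gamma_k}(x_k)=x_k$ while $p_{\gamma_k}(x_i)\neq x_k$ for $i\neq k$, so the atoms that $\radon\mu(\gamma_k)=\sum_i b_i\delta_{p_{\gamma_k}(x_i)}$ and $\radon\nu(\gamma_k)=\sum_i a_i\delta_{p_{\gamma_k}(x_i)}$ carry at the point $x_k$ are exactly $b_k$ and $a_k$. Equality of the two transforms yields $b_k=a_k$ for every $k$, hence $\mu=\nu$.

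The hard part, and the only place where real geometry enters, is the projection analysis: identifying the fibre $p_\gamma^{-1}(p_\gamma(x_i))$ as a perpendicular hypersurface and converting the statement ``$x_i$ projects onto $y$'' into the linear orthogonality condition $\langle v,w_i\rangle=0$, together with the codimension count that makes a generic direction work. This is precisely where smoothness is used — it supplies honest tangent spaces and a first-variation formula — and it is the ingredient that will have to be replaced by a tangent-cone (regular point) argument in the more general statement that follows. The uniqueness and convexity of the nearest-point projection, by contrast, come for free from the $\CAT(0)$ hypothesis, and I would note that negative curvature is not needed here: the argument works for any Hadamard smooth manifold.
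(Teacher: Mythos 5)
Your proof is correct and follows essentially the same route as the paper's: exclude any point $y\notin\{x_1,\dots,x_n\}$ from $\supp\mu$ by choosing a complete geodesic through $y$ whose direction avoids the finitely many orthogonality conditions coming from first variation, then recover each weight with a generic geodesic through $x_k$. Your closed-preimage support argument is a slightly cleaner substitute for the paper's $\varepsilon$-neighborhood/$1$-Lipschitz step, and your explicit treatment of the genericity and of the trivial case $\dim X=1$ (which the paper excludes via its standing negative-curvature assumption) are harmless refinements, not a different method.
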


Note that $A$ is dense in $\wass(X)$ so that this proposition ends the proof of Theorem
\ref{theo:intro:isometry} in the case of manifolds.

\begin{proof}
Write $\nu=\sum m_i\delta_{x_i}$ where $\sum m_i=1$. Note that since $X$ is a
\emph{negatively} curved manifold, it has dimension at least $2$.

First, we prove under the assumption $\radon\mu=\radon\nu$ that $\mu$ must be supported
on the $x_i$. Let $x$ be any other point, and consider a geodesic $\gamma$ such that
$\gamma_0=x$ and $\dot\gamma_0$ is not orthogonal to any of the geodesics $(xx_i)$.
Then for all $i$, $p_\gamma(x_i)\neq x$ and there is an $\varepsilon>0$ such that
the neighborhood of size $\varepsilon$ around $x$ on $\gamma$ does not contain any
of these projections. It follows that $\radon\nu(\gamma)$ is supported outside
this neighborhood, and so does $\radon\mu(\gamma)$. But the projection on $\gamma$ is $1$-Lipschitz,
so that $\mu$ must be supported outside the $\varepsilon$ neighborhood of $x$ in $X$. In particular,
$x\notin\supp\mu$.

Now, if $\gamma$ is a geodesic containing $x_i$, then $\radon\nu(\gamma)$ is finitely supported
with a mass at least $m_i$ at $x_i$. For a generic $\gamma$, the mass at $x_i$ is exactly
$m_i$. It follows immediately, since $\mu$ is supported on the $x_i$, that its mass at $x_i$
is $m_i$.
\end{proof}

The above proof mainly uses the fact that given a point $x$ and a finite number
of other points $x_i$, there is a geodesic $\gamma\ni x$ such that $p_\gamma(x_i)\neq x$ for
all $i$. It follows that the proof can be adjusted to get the general case.
To this end, we need to introduce some extra definitions.

%%%%%%%%%%%%%%%%%%%%%%%%%%%%%%%%

\subsection{Regular subset of a Hadamard space}

Assume that $X$ is a Hadamard geodesically complete space and let $p \in X$ be a point. 
We set $\Sigma_p'$ the set 
of all nontrivial geodesics starting at $p$. The angle $\angle$ is a pseudo-metric on $\Sigma_p'$. The 
space of directions $\Sigma_p$ at $p$ is the completion with respect to $\angle$ of the quotient metric 
space obtained from $\Sigma_p'$ by the relation $\angle =0$. Under these assumptions, the space of 
directions at $p$ is a compact $\CAT(1)$ space whose diameter is smaller or equal to $\pi$ (see 
\cite{BBI} for a proof). We shall also use the geometric dimension of a CAT space introduced by Kleiner 
in \cite{kleiner}.

\begin{defi}[Geometric dimension] Let $U$ be an open subset of a locally compact CAT(1) space. Then the dimension of $U$ is $0$ if $U$ is discrete. Otherwise, it is defined as
$$ \dim U= 1 + \sup_{p \in U} \dim \Sigma_p.$$
\end{defi}

Given a  geodesically complete Hadamard space $X$, one defines its {\it regular set} $\Reg(X)$ as the subset of $X$ made of points $p$ whose space of directions $\Sigma_p$ is isometric to a standard sphere $\S^k$ where $k$ is any integer (or equivalently, whose tangent cone at $p$ is isometric to the Euclidean space of dimension $k+1$). When $k=0$, we further require the existence of an open neighborhood of $p$ which is isometric to an open segment in $\R$. 

\begin{prop}
Assume that $X$ is a  geodesically complete Hada\-mard space.
Let $A$ be the set of measures supported on finite set of points all
located in the regular set of $X$.
For all $\mu\in\wass(X)$ and all $\nu\in A$, 
if $\radon\mu=\radon\nu$ then $\mu=\nu$.
\end{prop}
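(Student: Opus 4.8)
The plan is to follow the structure of the manifold proof almost verbatim, replacing the single ingredient that genuinely used smoothness --- namely the existence of a geodesic $\gamma$ through $x$ whose initial direction avoids being orthogonal to each of the segments $(xx_i)$ --- by an argument valid at regular points. Recall that the manifold proof had two stages: first show that $\supp\mu\subset\{x_i\}$, and then identify the masses. The second stage used only that a generic geodesic through $x_i$ picks up exactly mass $m_i$ at $x_i$ under $\radon\nu$, together with $1$-Lipschitzness of projection; this reasoning is purely metric and carries over unchanged once the support statement is established. So the real work is to redo the first stage in the singular setting.

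For the first stage, fix a point $x\notin\{x_i\}$ at which we wish to show $x\notin\supp\mu$. The key geometric fact I would isolate and prove is the following: \emph{at a regular point $p$, for any finite collection of points $q_1,\dots,q_N$ distinct from $p$, there exists a complete geodesic $\gamma$ through $p$ such that $p_\gamma(q_j)\neq p$ for every $j$.} The obstruction $p_\gamma(q_j)=p$ happens exactly when the geodesic $[pq_j]$ leaves $p$ in a direction \emph{orthogonal} (angle $\pi/2$) to the direction of $\gamma$ in the space of directions $\Sigma_p$. Here is where regularity is used: when $\Sigma_p\cong\S^k$, the directions orthogonal to a fixed direction $\dot\gamma_0\in\S^k$ form an equatorial subsphere $\S^{k-1}$, a set of measure zero in $\S^k$; equivalently, for each fixed $q_j$ the set of admissible directions $v\in\Sigma_p$ with $\angle(v,\dot{(pq_j)})=\pi/2$ is a nowhere-dense ``great subsphere.'' Choosing $\dot\gamma_0\in\S^k$ outside the finite union of the $k-1$ great subspheres orthogonal to the $\dot{(pq_j)}$ (when $k\geqslant 1$, which holds since $X$ is negatively curved and hence $\Reg$ points have $k\geqslant 1$), and using geodesic completeness to extend to a complete geodesic, yields the desired $\gamma$. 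Once such a $\gamma$ is found, the $\varepsilon$-neighborhood and $1$-Lipschitz argument of the manifold proof applies word for word to conclude $x\notin\supp\mu$.

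There is one subtlety in adapting the orthogonality computation to the CAT(0) setting that I expect to be the main obstacle: in a manifold the relation between $p_\gamma(q)=p$ and orthogonality of $[pq]$ to $\gamma$ is the first variation formula, and I must check that the same equivalence --- or at least the needed implication, that $p_\gamma(q)\neq p$ \emph{unless} $\dot{(pq)}\perp\dot\gamma_0$ --- survives in a general Hadamard space. The correct tool is the first variation inequality for $\CAT(0)$ spaces: for a geodesic $\gamma$ issued from $p$ and a point $q$, the right derivative at $0$ of $t\mapsto d(\gamma_t,q)$ equals $-\cos\angle_p(\dot\gamma_0,\dot{(pq)})$, so that if this angle is strictly less than $\pi/2$ the distance to $q$ strictly decreases and hence the nearest point $p_\gamma(q)$ is not $p$. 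I would invoke this (it holds in all $\CAT(0)$ spaces) to replace the first variation formula; the comparison angle inequalities guarantee $\angle_p$ is a genuine angle in $\Sigma_p$, so the ``generic direction in $\S^k$'' argument applies. A secondary point to verify is that the displacement interpolation and projection constructions make sense for the chosen geodesics; but since projection to a convex subset is always $1$-Lipschitz in a Hadamard space and geodesics are convex, this requires no curvature beyond $\CAT(0)$. With the genericity lemma in hand, both stages close exactly as in the manifold case, giving $\mu=\nu$.
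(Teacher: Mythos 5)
There is a genuine gap, and it sits exactly where you located ``the real work.'' Your first stage applies the genericity lemma at the point $x\notin\{x_i\}$ where you want to exclude mass, but that lemma is only available at \emph{regular} points, whereas $x$ ranges over all of $X$: the proposition assumes only that the atoms $x_i$ of $\nu$ lie in $\Reg(X)$, while $\mu\in\wass(X)$ is arbitrary, so $\supp\mu$ may consist entirely of singular points. Concretely, at a branch vertex of a simplicial tree $\Sigma_x$ is a discrete set and your ``equatorial subsphere of measure zero'' argument has no content; more generally $\Reg(X)$ is only dense, and since $\supp\mu$ is closed, knowing that no regular point outside $\{x_i\}$ carries mass tells you nothing (a measure concentrated on the singular set, e.g.\ on tree vertices, is invisible to your stage 1). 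So the manifold proof's first stage cannot be transplanted, and this is precisely why the paper abandons it.

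The paper's proof never controls $\supp\mu$ globally. It works only at the points $x_i$, which \emph{are} regular: by the first variation inequality, at $y=p_\gamma(x)$ one has $\angle_y(\sigma',\gamma')\geq\pi/2$ and $\angle_y(\sigma',-\gamma')\geq\pi/2$, so when $\Sigma_y\cong\S^k$ ($k\geq1$) both angles are \emph{exactly} $\pi/2$; thus the fiber $p_\gamma^{-1}(x_i)$ is contained in the perpendicular set $\gamma^\perp$, and a single well-chosen $\gamma\ni x_i$ only yields $\mu(\gamma^\perp)=m_i$, i.e.\ $m'_i:=\mu(\{x_i\})\leq m_i$ --- not localization at $x_i$, another point your ``word for word'' transfer glosses over. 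The paper then constructs inductively a family $\gamma_1,\dots,\gamma_{n_i}$ through $x_i$ with compatible $\varepsilon$-conditions and velocity vectors spanning the tangent space, so that $\cap_\alpha\gamma_\alpha^\perp=\{x_i\}$, forcing $m'_i=m_i$; since $\sum_i m_i=1$ and $\mu$ is a probability measure, $\mu=\nu$ follows with no support statement ever needed. Finally, your claim that regular points have $k\geq1$ ``since $X$ is negatively curved'' is wrong twice over: this proposition assumes no curvature condition beyond Hadamard, and even under negative curvature regular points with $\Sigma_p\cong\S^0$ occur (interior points of edges of a tree); this is why the definition of $\Reg(X)$ carries the segment-neighborhood clause for $k=0$, which the paper uses to settle that case separately.
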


\begin{proof}
Write $\nu=\sum m_i\delta_{x_i}$ where $\sum m_i=1$.
Consider one of the $x_i$, and let $m'_i:=\mu(\{x_i\})$. Let us first assume that $\Sigma_{x_i}$ is isometric to $\S^{n_i}$ with $n_i\geq 1$.

Let $\gamma$ be a geodesic and $x$ be a point that does not belong to the image of $\gamma$. Then, the angle at $y=p_{\gamma}(x)$ between the geodesic $\sigma$ from $y$ to $x$ and $\gamma$ satisfies $\angle  _y (\sigma', \gamma') \geq \pi/2$ according to the first variation formula. The same argument yields $\angle  _y (\sigma', -\gamma') \geq \pi/2$. Thus, if we further assume that $\Sigma_y= \S^k$ with $k\geq 1$, we  finally get
$$  \angle_y (\sigma', \gamma') =\angle _y (\sigma', -\gamma') = \pi/2.  $$

Using this property, it is now easy to find a geodesic $\gamma$ with $x_i \in \gamma$ and $p_{\gamma}(x_j) \neq x_i$ for all $j \neq i$. From this and $\radon\mu=\radon\nu$, we get $m'_i\leq m_i$. It also follows
that for some $\varepsilon>0$, the measure $\radon\mu(\gamma)=\radon\nu(\gamma)$
is concentrated outside $B(x_i,\varepsilon)\setminus{x_i}$.
Since $p_{\gamma}$ is one Lipschitz (see \cite[Proposition 2.4]{BH}), it also follows that $\mu$ is concentrated outside 
$B(x_i,\varepsilon)\setminus{p_\gamma^{-1}(x_i)}$; in particular $\mu$
is concentrated outside $B(x_i,\varepsilon)\setminus{x_i}$.

Let $\gamma^\perp$ denote the set of points $x$ such that the geodesic segment
$[xx_i]$ is orthogonal to $\gamma$ at $x_i$. By definition of the Radon transform, $\mu(\gamma^\perp\setminus x_i)=m_i-m'_i$ and
for all $x\in\supp\mu\setminus \gamma^\perp$, we have
$p_\gamma(x)\notin B(x_i,\varepsilon)$.

Choose a second geodesic $\gamma_2\ni x_i$, close enough to $\gamma$ to ensure
that for all $x\in\supp\mu\setminus \gamma^\perp$, we still have
$p_{\gamma_2}(x)\notin B(x_i,\varepsilon)$ (up to shrinking $\varepsilon$ a bit if necessary).
We can moreover assume that $\gamma_2$
enjoys the same properties we asked to $\gamma$, so that
\[\forall x\in\supp\mu\setminus \gamma^\perp\cap\gamma_2^\perp,\quad 
p_{\gamma_2}(x)\notin B(x_i,\varepsilon).\]

Recall that $\dim \Sigma_{x_i}= n_i\geq 1$. We can construct inductively
a family $\gamma_1,\ldots,\gamma_{n_i}$
of geodesics chosen as above, and such that their
velocity vectors at $x_i$ span its tangent space $T_{x_i}X$ which is isometric to $\R^{n_i}$. For all points $x$ in 
\[\supp\mu\setminus \cap_\alpha\gamma_\alpha^\perp,\]
we get $p_{\gamma_n}(x)\notin B(x_i,\varepsilon)$. But $\cap_\alpha\gamma_\alpha^\perp=\{x_i\}$,
and considering $\radon\mu(\gamma_n)=\radon\nu(\gamma_n)$ we get $m'_i=m_i$. 

It remains to treat the case when $\Sigma_{x_i}$ is isometric to $\S^0$. Recall that in that case, we 
further assume the existence of an open neighborhood $V$ of $x_i$ isometric to a short open segment. It 
is then clear that choosing any geodesic $\gamma$ going through $x_i$, $p_{\gamma}(x)\neq x_i$ for any 
$x\neq x_i$. 

Since
$\mu$ is a probability measure and $\sum m_i=1$, we deduce $\mu=\nu$.  
\end{proof}

To get our main result, it remains to prove that $\Reg(X)$ is a dense subset of $X$. This is a 
consequence of deep results of Lytchak and Nagano \cite{LN} on the structure of spaces with an upper 
curvature bound. 

%%%%%%%%%%%%%%%%%%%%%%%%%%%%%%%%%%%%%%%%%%%%%%%%%%%%%%%

\subsection{Fine properties of Hadamard spaces}

In this section, we report the results we need from the paper \cite{LN}, some of these results are also 
contained in \cite{OT}. We then use them below to complete the proof of Theorem 
\ref{theo:intro:isometry} in 
the general case. For simplicity, we give the statements for a geodesically complete Hadamard space $X$ 
only, see \cite{LN} for the full results.

Given $U$ a relatively compact open subset of $X$, the authors introduce the set of \emph{$n$-regular}
points $R_n(U)$ defined by 

$$ R_n(U)=\{x \in U; \Sigma_x \mbox{ is isometric  to } \S^{n-1}*Z\}$$
where $(Z,d)$ is a metric space and $\S^{n-1}*Z$ is the spherical join of $\S^{n-1}$ and $Z$.
We shall use the following results they prove:
\begin{theo}[Lytchak-Nagano]\label{LN1} Let $U$ be a locally compact open subset of $X$. 
Then, the Hausdorff dimension of $U\setminus R_n(U)$ satisfies
$$\dim_{\Ha} (U\setminus R_n(U)) \leq n-1.$$
\end{theo}
(This is \cite[Theorem 1.1]{LN}.)

\begin{lemm}[Lytchak-Nagano] \label{LN2}
Let $U$ be a relatively compact 
open subset of $X$ and assume that $\dim U=1$. Then, for each $x \in R_1(U)$,
there exists a bilipschitz embedding of a small open neighborhood of $x$ into $\R$.
\end{lemm}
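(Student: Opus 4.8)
The plan is to prove something stronger than claimed, namely that a small ball around $x$ is \emph{isometric} to an open interval; since an isometric embedding is $1$-bilipschitz, this yields the lemma. First I would pin down the link. Because $\dim U=1$, every space of directions $\Sigma_p$ with $p\in U$ is $0$-dimensional, i.e. discrete. For $x\in R_1(U)$ we have $\Sigma_x\cong\S^{0}*Z$, and since the geometric dimension of a spherical join satisfies $\dim(\S^{0}*Z)=1+\dim Z$, the vanishing of $\dim\Sigma_x$ forces $Z=\emptyset$, whence $\S^{0}*\emptyset=\S^{0}$ and $\Sigma_x\cong\S^{0}$. Thus $x$ carries exactly two directions, at angle $\pi$, and its tangent cone is the Euclidean cone over $\S^{0}$, namely $\R$.

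Next I would exploit the local tree structure. It is classical that a locally compact, geodesically complete $\CAT(0)$ space all of whose spaces of directions are discrete is locally an $\R$-tree; hence some neighborhood $V\subset U$ of $x$ is an $\R$-tree. In such a tree the branches at a point are pairwise at angle $\pi$ (a comparison computation with two legs of a tripod gives comparison angle $\pi$), so the cardinality of $\Sigma_p$ equals the valence of $p$; in particular $\Sigma_x\cong\S^{0}$ says $x$ has valence exactly $2$. Geodesic completeness rules out valence $\le 1$: an endpoint would carry a single direction, and the geodesic running into it could not be extended at angle $\pi$, contradicting completeness. Consequently every point of $V$ has valence at least $2$, and every branch issuing from a point of $V$ is an infinite ray.

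The crux is then to show that no branch points accumulate at $x$. Suppose branch points $x_n\to x$ (valence $\ge 3$) existed. Each $x_n$ carries an extra branch, infinite by the previous step, so I may pick $w_n$ on it with $d(x_n,w_n)=\ell$ for a fixed $\ell>0$. In a tree the geodesic from $w_n$ to $w_m$ leaves the $n$th branch at $x_n$, runs along the trunk to $x_m$, and enters the $m$th branch, so $d(w_n,w_m)=2\ell+d(x_n,x_m)\ge 2\ell$. The points $w_n$ then form an infinite $2\ell$-separated set inside a fixed compact ball (using that $U$ is relatively compact and $X$ locally compact), a contradiction. Hence some ball $B(x,r)\subset V$ contains no branch point; as it also contains no endpoint, every point of $B(x,r)$ has valence exactly $2$, and a convex subset of a tree all of whose points have valence $2$ is an arc. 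Therefore $B(x,r)$ is isometric to an open interval of $\R$, which is in particular a bilipschitz embedding into $\R$.

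The delicate step is the third paragraph: promoting the purely infinitesimal regularity at $x$ (the link being $\S^{0}$) to one-dimensional behavior on a whole neighborhood. Neither hypothesis suffices alone — geodesic completeness forces all branches to be infinite, and local compactness then prevents infinitely many such branches from clustering near $x$; it is their combination that does the work. A more robust but heavier alternative, closer to the methods of Lytchak and Nagano, would bypass the tree classification and instead take the distance function to a point $a$ with $[ax]$ extended through $x$ as a chart, proving it bilipschitz by bounding its distortion through the angle-$\pi$ condition together with the (semi)continuity of spaces of directions.
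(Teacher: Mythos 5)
Your reduction of the hypothesis to $\Sigma_x\cong\S^0$ is correct and is indeed how the paper itself exploits $\dim U=1$ (via the recursive definition of geometric dimension and the fact that $\S^0*Z$ with $Z\neq\emptyset$ contains an arc, hence is not discrete). But note that the paper does not prove this lemma at all: it is quoted verbatim from Lytchak--Nagano, obtained by combining their Lemmas 9.6 and 11.5, which are part of a substantial structure theory for geodesically complete spaces with curvature bounded above. Your proposal, by contrast, rests entirely on the sentence ``it is classical that a locally compact, geodesically complete $\CAT(0)$ space all of whose spaces of directions are discrete is locally an $\R$-tree,'' and this is a genuine gap: that assertion is essentially the content of the lemma (indeed, the one-dimensional local-graph structure is one of the things \cite{LN} prove), and it is not a routine consequence of the definitions. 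Concretely, discreteness of $\Sigma_p$ only tells you that distinct \emph{directions} are at angle $\pi$ (using that $\Sigma_p$ is a complete $\CAT(1)$ space, so two points at distance $<\pi$ would be joined by a geodesic, contradicting discreteness). It does not by itself rule out two geodesics issuing from $p$ that diverge immediately at angle $0$ -- such geodesics define the \emph{same} point of $\Sigma_p$, and the $\CAT(0)$ comparison (convexity of $t\mapsto d(\gamma_t,\sigma_t)$ with vanishing derivative at $0$) does not forbid them. So your identification ``$\mathrm{card}\,\Sigma_p=$ valence'' and the tree dichotomy are exactly what needs proving. A correct assembly from the literature is possible but nontrivial: Kleiner's theorem \cite{kleiner} equating geometric and topological dimension, the theorem that an embedded circle in a one-dimensional metric space is essential (Curtis--Fort, Cannon--Conner--Zastrow), simple connectedness of $X$, and the characterization of uniquely arcwise connected geodesic spaces as $\R$-trees; none of this is invoked or sketched in your proposal, so the crux is assumed rather than proved.

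There is also a fixable error in your separation argument. The estimate $d(w_n,w_m)=2\ell+d(x_n,x_m)$ fails for adversarial configurations: if $x_m$ lies on the segment $[x,x_n]$ and its ``extra'' branch happens to point toward $x_n$, then $w_m$ can land on the very branch carrying $w_n$, and $d(w_n,w_m)$ can be as small as $d(x_n,x_m)\to 0$. You must either choose the directions of the $w_n$ to avoid the directions toward the other branch points (which requires care with infinitely many constraints and finitely many directions), or replace the counting argument by a compactness one: extract, via Arzel\`a--Ascoli in the proper space $X$, limits of the three length-$\rho$ rays issuing from $x_n$; pairwise concatenations being geodesics passes to the limit, yielding three distinct directions at $x$ pairwise at angle $\pi$, contradicting $\Sigma_x\cong\S^0$. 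With that repair, and \emph{granting} the local tree structure, the rest of your argument (no leaves by geodesic completeness, a branch-point-free convex ball is an arc, hence isometric to an interval -- which is indeed the stronger form the paper later uses) is sound. As it stands, however, the proposal replaces the cited deep input of \cite{LN} with an unproven claim of comparable depth, so it does not constitute a proof.
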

(Obtained by combining \cite[Lemma 9.6]{LN} (see also the discussion above the statement) and 
\cite[Lemma 11.5]{LN}.)
 
\begin{theo}[Otsu-Tanoue, Lytchak-Nagano]\label{LN3} 
Let $U$  be a relatively compact open subset of $X$. Then, there exists an integer $n \in \N$ such that 
$\Ha^n(U) \in (0,+\infty)$. Moreover, the geometric dimension of $U$ coincides with its Hausdorff 
dimension, namely $\dim U= n$.
\end{theo}
(This is \cite[Theorem 13.1]{LN} which is a refined version of earlier result in \cite{OT}.)
  
\begin{rema} At first sight, it is not even clear that the geometric dimension $\dim U$ is finite. To 
prove this, first note that
$$ \dim (U) \leq \dim_{\Ha} (U)$$
thanks to a result due to Kleiner \cite[Theorem A]{kleiner} and the fact that the Hausdorff dimension is not smaller than the topological dimension. 
Then, we claim that
$$ \dim_{\Ha}(U) \leq \dim_r (U)$$
where $\dim_r$ stands for the rough dimension introduced by Burago-Gromov-Perelman, see \cite[Chapter 10]{BBI}. Thus, it suffices to convince
oneself that 
\begin{equation}\label{rdim}
\dim_r (U)<+\infty.
\end{equation}
 To this aim, Lytchak and Nagano prove that $(U,d)$ is a doubling metric space (contrary to what precedes, this 
is strongly based on the geodesic completeness assumption). This property easily implies (\ref{rdim}).
\end{rema}  

%%%%%%%%%%%%%%%%%%%%%%%%%%%%%%%%%%%%%%%%%%%%%%%%
\subsection{Density of the regular set}

In this section, we use the tools described above to prove the following statement,
completing the proof of Theorem \ref{theo:intro:isometry}.

\begin{theo}Let $X$ be a geodesically complete Hadamard space. 
Then $\Reg(X)$ is a dense subset of $X$.
\end{theo}
 
\begin{proof}
Given any $x \in X$, we have to prove that $x$ is in the closure of $\Reg(X)$.

We first introduce a definition: we call the minimal dimension of a neighborhood of 
$x$ the \emph{local dimension} at $x$ and denote it by $n_x$; moreover
there is an open, relatively compact neighborhood $U$ of $x$ such that $\dim U=n_x$.
To see this, recall that $X$ must have relatively compact balls, and
apply Theorem \ref{LN3} to $(\dim B(x,1/k))_{k\ge 1}$: it
is a decreasing sequence with positive integer value, thus has a limit which is reached for some
$k$. Moreover, we get that $n_x$ is also the Hausdorff dimension of $U$.

The theorem follows from two claims: first $R_{n_x}(U) \subset \Reg(X)$,
second $x$ is in the closure of $R_{n_x}(U)$.

To prove the first claim, observe that if $Z$ is any non-empty space, the compactness of $\Sigma_x$ then yields that $\mathbb{S}^{n_x-1} * Z$
has dimension at least $n_x$; since $\dim U = n_x$, this shows that all points of 
$R_{n_x}(U)$ have their space of direction isometric to $\mathbb{S}^{n_x-1}$.
When $n_x=1$, Lemma \ref{LN2} further ensures that points in $R_{n_x}(U)$
have a neighborhood isometric to an interval, proving the claim.

We prove the second claim by contradiction. If $x$ were not in the closure of $R_{n_x}(U)$, 
by Theorem \ref{LN1} it would lie in an open set $U'\subset U$ of Hausdorff
dimension less than $n_x$. By Theorem \ref{LN3},  $U'$ has dimension less
than $n_x$, contradicting the definition of local dimension.
\end{proof}
%%%%%%%%%%%%%%%%%%%%%%%%%%%%%%%%%%%%%%%%%%%%%%%%%%%%%%%%%%%%%%%%%%%%%%%%%%%%%%%%%

%%%%%%%%%%%%%%%%%%%%%%%%%%%%%%%%%%%%%%%%%%%%%%%%%%%%%%%%%%%%%%%%%%%%%%%%%%%%%%%%%
\section{Appendix: the Radon transform on trees}

In this appendix we prove a side result, not needed in the proof of Theorem
\ref{theo:intro:isometry}, showing that in the case of simplicial trees one can
explicitly inverse the Radon transform introduced above.

Let $X$ be a locally finite tree that is not a line. 
We describe $X$ by a couple
$(V,E)$ where $V$ is the set of vertices;
$E$ is the set of edges, each endowed with one or two endpoints
in $V$ and a length. 

For all $x\in V$, let $k(x)$ be the valency of $x$, that is
the number of edges incident to $x$.
We assume that no vertex has valency $2$, since
otherwise we could describe the same metric space by a simpler graph.
Note also that edges with only one endpoint have infinite length since
$X$ is assumed to be complete.

In this setting, $X$ is geodesically complete if and only if 
it has no leaf (vertex of valency $1$).
Assume this, and let $\gamma$ be a complete geodesic.
If $x$ is a vertex lying on $\gamma$ and
$C_1,\ldots,C_{k(x)}$ are the connected components of $X\setminus\{x\}$,
let $\perpend^x(\gamma)$ be the union of $x$ and of the $C_i$ not meeting $\gamma$
(see Figure \ref{fig:perpendicular}).
It inherits a tree structure from $X$. In fact, $\perpend^x(\gamma)$
depends only upon the two edges $e,f$ of $\gamma$ that are incident to
$x$. We therefore let $\perpend^x(ef)=\perpend^x(\gamma)$.

\begin{figure}[htp]\begin{center}
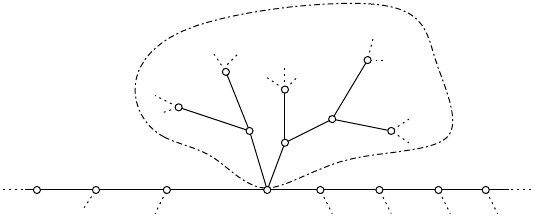
\caption{A perpendicular to a geodesic.}\label{fig:perpendicular}
\end{center}\end{figure}

The levels of $p_\gamma$ are called the \emph{perpendiculars} of $\gamma$,
they also are the bissectors of its points. They are exactly:
\begin{itemize}
\item the sets $\perpend^x(\gamma)$ where $x$ is a vertex of $\gamma$.
\item the sets $\{x\}$ where $x$ is a point interior to an edge of $\gamma$.
\end{itemize}

A measure $\mu\in\wass(X)$ can be decomposed into a part supported outside vertices,
which is obviously determined by the projections of $\mu$ on the various geodesics
of $X$, and an atomic part supported on vertices. Therefore, we are reduced to
study the perpendicular Radon transform reformulated as follows for functions
defined on $V$ instead of measures on $X$.

\begin{defi}[combinatorial Radon transform]
A \emph{flag} of $X$ is defined as a triple $(x,ef)$ where
$x$ is a vertex, $e \neq f$ are edges incident to $x$ and $ef$
denotes an unordered pair.
Let us denote the set of flags by $F$; we write $x\in ef$
to say that $(x,ef)\in F$.

Given  a summable function $h$ defined on the vertices of $X$, we
define its \emph{combinatorial perpendicular Radon transform} as the map
\begin{eqnarray*}
\radon h :\qquad F &\to& \mathbb{R} \\
           (x,ef) &\mapsto& \sum_{y\in \perpend^x(ef)} h(y)
\end{eqnarray*}
where the sum is on vertices of $\perpend^x(ef)$.
\end{defi}

It seems that this Radon transform has not been studied before,
contrary to the transforms defined using geodesics \cite{Berenstein},
horocycles \cite{Cartier, Betori} and circles \cite{Casadio}.

\begin{theo}[Inversion formula]
Two maps $h,l:V\to\mathbb{R}$ such that $\sum h=\sum l$ and
$\radon h=\radon l$ are equal.
More precisely, we can recover $h$ from $\radon h$ by the following
inversion formula:
\[h(x) = \frac{1}{k(x)-1}\sum_{ef\ni x}\radon h(x,ef)-\frac{k(x)-2}2 \sum_{y\in V} h(y)\]
where the first sum is over the set of pairs of edges incident to $x$.
\end{theo}

\begin{proof}
The formula relies on a simple double counting argument:
\begin{eqnarray*}
\sum_{ef\ni x}\radon h(x,ef) &=& \sum_{ef\ni x} \,\sum_{y\in\perpend^x(ef)} h(y)\\
  &=& \sum_{y\in V} h(y) n_x(y)
\end{eqnarray*}
where $n_x(y)$ is the number of flags $(x,ef)$ such that $y\in\perpend^x(ef)$.
If $y\neq x$, let $e_y$ be the edge incident to $x$ starting
the geodesic segment from $x$ to $y$. Then $y\in\perpend^x(ef)$ if and only
if $e,f\neq e_y$. Therefore, $n_x(y)=\binom{k(x)-1}{2}$. But $n_x(x)=\binom{k(x)}{2}$,
so that
\[\sum_{ef\ni x}\radon h(x,ef)=\binom{k(x)-1}{2}\sum_{y\in V} h(y)+ (k(x)-1)h(x).\]
\end{proof}

%%%%%%%%%%%%%%%%%%%%%%%%%%%%%%%%%%%%%%%%%%%%%%%%
\begin{acknowledgements} J. Bertrand would like to thank Sasha Lytchak for useful discussions and the talk he gave in Toulouse on his joint work \cite{LN} with K. Nagano.
\end{acknowledgements}

\bibliographystyle{alpha}
\bibliography{biblio}

%%%%%%%%%%%%%%%%%%%%%%%%%%%%%%%%%%%%%%%%%%%%%%%%%%%%%%%%%%%%%%%
%%%%%%%%%%%%%%%%%%%%%%%%%%%%%%%%%%%%%%%%%%%%%%%%%%%%%%%%%%%%%%%
\end{document}